\def\verDraft{1}
\def\verFinal{2}
\def\ver{2}
\newcommand{\notindraft}[1]{}
\newcommand{\notindraft}[1]{#1}
\newcommand{\notinfinal}[1]{#1}
\newcommand{\notinfinal}[1]{}
\def\ver{1}
\definecolor{c20}{rgb}{0.,0.7,0.}
\definecolor{c30}{rgb}{0.,0.,1.}
\definecolor{c40}{rgb}{1,0.1,0.7}
\definecolor{c50}{rgb}{1,0,0}
\definecolor{c20}{rgb}{0.,0,0.}
\definecolor{c30}{rgb}{0.,0.,0}
\definecolor{c40}{rgb}{0,00,00}
\definecolor{c50}{rgb}{0,0,0}
\def\cDD#1{\textcolor{c20}{#1}}
\def\cE#1{\textcolor{c50}{#1}}
\def\cDD#1{#1}
\def\cD#1{#1}
\def\cE#1{#1}
\def\cH#1{\textcolor{c30}{#1}}
\def\cH#1{#1}
\def\aH#1{\textcolor{c30}{#1}}
\def\aH#1{#1}
\def\bH#1{\textcolor{c30}{#1}}
\def\bH#1{#1}
\def\dH#1{\textcolor{c30}{#1}}
\def\dH#1{#1}
\def\hH#1{\textcolor{c30}{#1}}
\def\hH#1{#1}
\def\eH#1{\textcolor{c30}{#1}}
\def\cK#1{\textcolor{c40}{#1}}
\def\aK#1{\textcolor{c40}{#1}}
\def\aK#1{#1}
\def\bK#1{\textcolor{green}{#1}}
\def\bK#1{#1}
\def\cK#1{\textcolor{green}{#1}}
\def\cK#1{#1}
\def\eH#1{#1}
\newcommand{\PP}{\mathbb{P}}
\newcommand{\mean}[1]{\mathbb{E}\left\{ #1\right\}}
\newtheorem{theo}{Theorem}[section]
\newtheorem{sat}[theo]{Proposition}
\newtheorem{de}[theo]{Definition}
\newtheorem{lem}[theo]{Lemma}
\newtheorem{korr}[theo]{Corollary}
\newtheorem{remark}[theo]{Remark}
\newtheorem{remarks}[theo]{Remarks}
\newcommand{\netheo}[1]{{Theorem \ref{#1}}}
\newcommand{\prooftheo}[1]{ \textsc{Proof of Theorem} \ref{#1} }
\newcommand{\kb}[1]{\boldsymbol{#1}}
\newcommand{\vk}[1]{\kb{#1}}
\def\kal#1{{\cal{ #1}}}
\newcommand{\ve}{\varepsilon}
\newcommand{\E}[1]{\mathbb{E}\left\{#1\right\}}
\newcommand{\eg}[1]{\mathbb{E}\biggl\{#1\biggr\}}
\newcommand{\pk}[1]{\mathbb{P}\left(#1\right) }
\newcommand{\pb}[1]{\mathbb{P}\Bigl(#1 \Bigr )}
\newcommand{\R}{\!I\!\!R}
\newcommand{\inr}{\in \R}
\newcommand{\ldot}{,\ldots,}
\newcommand{\limit}[1]{\lim_{#1 \to   \infty}}
\newcommand{\BQN}{\begin{eqnarray}}
\newcommand{\EQN}{\end{eqnarray}}
\newcommand{\BQNY}{\begin{eqnarray*}}
\newcommand{\EQNY}{\end{eqnarray*}}
\newcommand{\BS}{\begin{sat}}
\newcommand{\ES}{\end{sat}}
\newcommand{\BT}{\begin{theo}}
\newcommand{\ET}{\end{theo}}
\newcommand{\BK}{\begin{korr}}
\newcommand{\EK}{\end{korr}}
\newcommand{\BD}{\begin{de}}
\newcommand{\ED}{\end{de}}
\newcommand{\BIT}{\begin{itemize}}
\newcommand{\EIT}{\end{itemize}}
\newcommand{\BDI}{\begin{description}}
\newcommand{\EDI}{\end{description}}
\newcommand{\BRM}{\begin{remarks}}
\newcommand{\ERM}{\end{remarks}}
\newcommand{\QED}{\hfill $\Box$}
\newcommand{\IF}{\infty}
\newcommand{\BTH}{\begin{theo}}
\newcommand{\ETH}{\end{theo}}
\newcommand{\BPR}{\begin{sat}}
\newcommand{\EPR}{\end{sat}}
\newcommand{\BC}{\begin{cases}}
\newcommand{\EC}{\end{cases}}
\newcommand{\COM}[1]{}
\newcommand{\BL}{\begin{lem}}
\newcommand{\EL}{\end{lem}}
\def\SI{\Sigma}
\def\Y{\vk{Y}}
\def\U{\vk{U}}
\def\1d{\{1 \ldot d\}}
\def\Y{\vk{Y}}
\newcommand{\equaldis}{\stackrel{d}{=}}
\def\Xiu{X_i(u)}
\def\Ziu{\cE{X_i(u)}}
\def\Zju{\cE{X_j(u)}}
\def\Zku{\cE{X_k(u)}}
\def\SSU{\cE{S(u)}}
\def\ftj{\cE{f}}
\def\lui{\left(\frac u{\lambda_i}\right)}
\def\luiu{\left(\frac u{\lambda}\right)}
\def\tH#1{\textcolor{c30}{#1}}
\def\tH#1{#1}
\def\zH#1{\textcolor{c30}{#1}}
\def\zD#1{\textcolor{c20}{#1}}
\def\zH#1{#1}
\def\zD#1{#1}
\begin{document}

\begin{center}
\thispagestyle{empty}

{\Large
\cH{Efficient simulation of tail probabilities for sums of log-elliptical risks}}

       \vskip 0.4 cm

         \centerline{\large
         Dominik Kortschak\footnote{
Department of Actuarial Science, Faculty of Business and Economics,
University of Lausanne, B\^{a}timent Extranef, UNIL-Dorigny, 1015 Lausanne, Switzerland, email: Dominik.Kortschak@unil.ch}
 and Enkelejd Hashorva\footnote{
Department of Actuarial Science, Faculty of Business and Economics,
University of Lausanne, B\^{a}timent Extranef, UNIL-Dorigny, 1015 Lausanne, Switzerland, email: Enkelejd.Hashorva@unil.ch}
}

\today{}

\end{center}

{\bf Abstract:}
\dH{In the framework of dependent risks it is a crucial task for risk management purposes to quantify the probability that the aggregated risk
exceeds some large value $u$. Motivated by Asmussen et al.\ (2011) in this paper we introduce a modified Asmussen-Kroese estimator for simulation of the rare event that the aggregated risk \hH{exceeds} $u$. We show that in the framework of log-Gaussian risks our novel estimator has the best possible performance i.e., it has  asymptotically vanishing relative error. For the more general class of log-elliptical risks with marginal distributions in the Gumbel max-domain of attraction we propose a modified Rojas-Nandayapa estimator of the rare events of interest, which for specific importance sampling densities has a good logarithmic performance. Our numerical results presented in this paper demonstrate the excellent performance of our novel Asmussen-Kroese \hH{algorithm}. \\
{\it Key words and phrases}: Asmussen-Kroese estimator; Rojas-Nandayapa estimator; 
log-elliptical distribution; log-Gaussian distribution; 
asymptotically vanishing relative error.
}

\section{Introduction}
Efficient simulation of the tails of aggregated dependent risks has been the topic of many recent research papers, culminating in the  \tH{contribution} Asmussen et al.\ (2011). The fact that risks -- here a synonym for random variables -- are considered to be dependent,
 poses considerable difficulties in understanding the tail behavior of the aggregated risk. Nevertheless in diverse applications from finance and insurance (Goovaerts et al.\ (2005), Valdez et al.\ (2009), Asmussen et al.\ (2011)), risk management
 (Vanduffel et al.\ (2008), Mitra and  Resncik  (2009)),  wireless communications (Pratesi et al.\ (2006), Tellambura (2008)) a few to be mentioned here,  correlated log-Gaussian (log-normal) risks appear naturally.\\
In this paper we will allow that the \tH{parameters} of the log-normal distribution depend on $u$. Therefore
let $\vk{N}=(N_1,\ldots,N_d)^\top$ be a vector of $d$ independent standard Gaussian random variables, and let $A_u,u>0$
be a lower non-singular triangular matrix. For $\SI_u\tH{=}A_u(A_u)^\top$ assume that $\SI_u$ is a correlation matrix, i.e.,
\[\sigma_{11}(u)=\cdots=\sigma_{dd}(u)=1, \quad \sigma_{ij}(u)\in [-1,1], \quad i\not=j,\quad u>0.\]
Set for $u>0$
\BQN \label{eq:r:yu}
(Y_1(u),\ldots,Y_d(u))^\top=A_u\vk{N}
\EQN
and  define
$$S(u)= \sum_{i=1}^d X_i(u), \quad \text{\hH{with}}\quad X_i(u)=\lambda_i e^{\beta_i \gamma_u Y_i(u) }, \quad i\le d ,$$
where $\lambda_i,\beta_i, \gamma_u,u>0$ are given positive constants. In this paper we are interested in the numerical estimation of
\[
 \alpha(u)=\pk{S(u)> u}.
\]
For $d=2$ and \tH{both} $A_u, \gamma_u$ \tH{being constant with respect to $u$, the asymptotic expansion}
\BQN \label{AA06}
\alpha(u)\sim \PP(X_1(u)>u)+\PP(X_2\tH{(u)}>u), \quad u\to \IF
\EQN
has been first \tH{derived} in Asmussen and Rojas-Nandayapa (2008) (see for a \tH{heuristical} derivation Albrecher et al.\ (2006)). Similar \tH{asymptotic results to \eqref{AA06} for general $\gamma_u, A_u$} have been derived in Asmussen et al. (2011) and Hashorva (2013a).
\tH{In our notation $f(u) \sim g(u)$ means that $\limit{u} f(u)/g(u)=1$ for $f,g$
two given function.}\\
\tH{In the light of known} numerical examples (see e.g. Mitra and Resnik (2009))
\tH{the  asymptotic expansion of $\alpha(u)$  given in \eqref{AA06}} is too crude to be useful in practice. Hence one seeks for numerical solutions for $\alpha(u)$. A widely used numerical method for this kind of problems is Monte Carlo simulation. Since $\alpha(u)\to 0$ as $u\to\infty$ we are in the classical situation of rare event simulations.
By definition, see e.g., Asmussen and Glynn (2007), an unbiased estimator $Z(u)$ of $\alpha(u)$  (i.e., a family of random variables satisfying $\mean{Z(u)}=\alpha(u)$) is said
to be (asymptotically as $u\to \IF$) logarithmically efficient if
$$
\lim_{u\to \IF} \frac{ \log \E{ Z(u)^2} }{\log \alpha(u)}=2.
$$
A concept that goes beyond that is introduced in Junea (2007), namely $Z(u)$ has
asymptotically vanishing relative error if \tH{further}
\BQN
\lim_{u \to \IF } \eg{ \biggl(\frac{Z(u)}{\alpha(u)}\biggr)^2}&=&1.
\EQN
Such estimator of $\alpha(u)$ reaches the best possible asymptotic performance.\\
It is well-known (see e.g., Cambanis et al.\ (1981)) that the $d$-dimensional standard Gaussian random vector $\vk{N}$ has the stochastic representation
$$ \vk{N} \equaldis R \U,$$
with $R>0$ such that $R^2$ is chi-square distributed with $d$ degrees of freedom being further independent of the random vector $\U$ which is uniformly distributed on the unit sphere of $\R^d$ (hereafter  $\U$ will be reserved only for such random vectors).
If we drop the distributional assumption on $R$, supposing only that it has some distribution function $F$,  
\hH{then $\Y(u),u>0$ with stochastic representation}
$$\Y(u)\equaldis  \exp( A_u R \U)$$
is a log-elliptical random vector; see Cambanis et al.\ (1981) for the basic distributional properties of elliptical random vectors.
The framework of multivariate log-elliptical risks is useful in finance and insurance models (see  e.g., Hamada and Valdez (2008),  Valdez et al.\ (2009)). A key advantage when working with elliptical and log-elliptical risks is that in our model there is no distributional restriction on each individual risk; we impose only asymptotic constrains which are \bH{satisfied}  by a large class of possible marginal distributions. Rojas-Nandayapa (2008) provided an estimator that also works for this class of distributions.\\

Organisation of the paper: \tH{In the following we review some key results from the literature. Section 3 gives details of }
\aH{our} novel Asmussen-Kroese estimator of $\alpha(u)$ \tH{which has excellent performance for} log-Gaussian risks. \aH{In Section 4 we shall introduce the modified Rojas-Nandayapa estimator which can be utilised for log-elliptical risks.
The numerical \eH{illustrations} \bH{presented} in Section 5 show the excellent performance of our modified Asmussen-Kroese estimator.
The proofs of all results are relegated to Section 6,
which is followed by an Appendix.}

\section{Details for known estimators}\label{section:formerresults}
When \tH{$X_1(u),\ldots,X_d(u)$} are \cDD{independent} random variables with common distribution function $F$ an estimator $Z_{AK}(u)$ of $\alpha(u)$
\tH{(referred to as Asmussen-Kroese estimator)} is introduced in Asmussen and Kroese (2005). Namely,  we have \tH{(set $\overline{F}= 1- F$)}
\[
 Z_{AK}\tH{(u)}= d\cDD{\cdot} \overline F\left(\max\left(u+ X_d(u)-\tH{S(u)},\max_{1 \le i< d} X_i\tH{(u)}\right)\right),
\]
which is \tH{motivated} by the \tH{following} decomposition
\BQN\label{psij}
\alpha(u)= \sum_{j=1}^d \Psi_j(u), \quad \text{with} \quad \Psi_j(u)= \pk{S(u)> u, X_j(u) =M(u)}\quad \text{and}\quad M(u)=\max_{1\le i \le d} X_j(u).
\EQN
Accounting for the dependence of the risks,
\COM{
It was natural to use this estimator also for dependent random variables; a first try  leads to the estimator
\[
 Z_{DAK}(u)=\sum_{i=1}^d \PP\left(\tH{S(u)}>u, \tH{M(u)} =X_i(u) \big|X_j(u),i\not=j\right).
\]
This estimator was introduced in  Asmussen and Rojas-Nandayapa (2006), but has \tH{been} proven to have a rather disappointing performance in the dependent case. \\
}
in the setup of log-Gaussian risks, Asmussen et al. (2011)  introduces \tH{three different} estimators of $\alpha(u)$. The first \tH{one denoted by} $Z_{IS}(u)$ is an importance sampling estimator where the importance sampling \tH{distribution} is log-Gaussian but the matrix $\Sigma$ is multiplied by some constant $\gamma_{u}$, which is deduced from an asymptotic argument. Related to this estimator is  $Z_{IS-CE}(u)$ where again the importance sampling \tH{distribution} is   log-Gaussian but this time also the mean vector can be different.
The parameters are then chosen with the cross entropy method.\\
\tH{The third estimator of $\alpha(u)$ introduced in the aforementioned paper has a vanishing relative error}. Write   $\PP(S(u)>u)$ as
$$\PP\left(S(u)>u,\max_{i\le d} X_i(u)>u\right)+\PP\left(S(u)>u,\max_{i\le d} X_i(u)\le u\right):=\alpha_1(u)+\alpha_2(u).$$
For the first term $\alpha_1(u)$ an importance sampling estimator that has vanishing relative error is suggested therein, whereas  for the second term $\alpha_2(u)$ an importance sampling estimator equivalent to $Z_{IS}(u)$ respectively $Z_{IS-CE}(u)$ is employed.
The sum of these estimators is denoted by $Z_{ISVE}(u)$ and  $Z_{ISVE-CE}(u)$, respectively.

\tH{The more general case of log-elliptical risks is addressed in  Rojas-Nandayapa (2008).}
The main idea of Rojas-Nandayapa estimator of $\alpha(u)$ is that for a log-elliptical random vector we have
$S(u)=h(R, A_u, \vk{U})$ for some function $h$, where  $R$ and $\U$ are independent. Thus conditioning on $\vk{U}$ yields
$$ \alpha(u)=\pk{S(u)> u}= \pk{h(R, A_u, \vk{U})> u}=\E{ \pk{ h(R, A_u, \vk{U})> u)\lvert \vk{U}}}, \quad u>0.$$
Denote in the following by $\vk{u}$ a simulated value (outcome) of $\vk{U}$. Since $\SI_u$ is \tH{assumed to be} positive definite,
for any fixed $u$, the equation $h(R,A_u,\vk{u})=u$ solved for $r>0$ has at most two solutions denoted by $\psi_L(u, \vk{u})$ \aH{and} $\psi_U(u, \vk{u})$.\\
For a given outcome $\vk{u}$ the function $h$ can be $S1)$ strictly decreasing, $S2)$ decreasing or increasing, and $S3)$
 strictly increasing. Both properties $S1,S2, S3$ are examined in Rojas-Nandayapa  (2008), p.\ 62. We define $\psi_L(u,\vk{u}), \psi_U(u, \vk{u})$
 as therein, for instance if $S2$ holds, then there  \aH{exist} at most two different solutions 
 \aH{satisfying}
$$\limit{u} \psi_L(u, \vk{u})=-\IF, \text{  and } \limit{u} \psi_U(u, \vk{u})=\IF.$$
The Rojas-Nandayapa  estimator of \aH{$\alpha(u)$}
is defined as
\BQN \label{Rojas}
Z_R(u) &=& \pk{R < \psi_L(u, \vk{U})}\aH{\mathbb{I}}_{\{ \psi_L(u,\vk{U})>0\}}+ \pk{R> \psi_U(u,\vk{U})}, \quad u>0.
\EQN
 \tH{Summarising}, the algorithm proposed in \aH{Rojas-Nandayapa  (2008) consists of the} following steps:\\
A. Simulate the random vector $\vk{U}$ which is uniformly distributed \bH{on the unit sphere of $\R^d$}.\\
B. Calculate $\psi_L(u, \vk{U}), \psi_U(u, \vk{U})$.\\
C. Return $\aH{Z_R}(u)$ as in \eqref{Rojas}.\\
\tH{As shown in  the aforementioned paper $Z_R(u)$} is unbiased and {\hH{logarithmically} (asymptotic) efficient} under certain restrictions on the random radius $R$.

\section{A novel Asmussen-Kroese estimator}\label{section:AK}
One reason that Asmussen-Kroese estimator has a good asymptotic behavior in the independent case is that heuristically when the sum is large then one element is large and \tH{all the others} behave in a normal way.
In this section we want to present a new modification of Asmussen-Kroese estimator that is better suited for log-Gaussian risks.
In this paper, for the efficient estimation of the tail probability $\alpha(u)= \pk{S(u)>u}$ for $u$ large we use the decomposition \eqref{psij}.
  We shall consider  the estimation, for each index $\bH{j}\le d$, of the \aH{partial} max-sum probability $\Psi_{\bH{j}}(u)$ \bH{defined in \eqref{psij}}.
  In order to compensate for the role of different components being maximal, (corresponding to different  indexes $\bH{j}$)
 we shall utilise a stratification idea. Specifically, when $\pk{\kal{I}=i}=\frac{\pk{\Ziu >u}}{\sum_{j=1}^d  \pk{\Zju >u}}, i\le d$ and $\kal{I}$ \hH{is} a random variable, then
$$
\bH{\alpha(u)=} \pk{S(u)>u}=\left({\sum_{i=1}^d  \pk{\Ziu>u}}\right) \sum_{i=1}^d \pk{\kal{I}=i} \frac{\Psi_i(u)}
{ \pk{X_i\aH{(u)}>u}},
$$
\bH{which leads to \tH{our} novel modified Asmussen-Kroese estimator of $\bH{\alpha(u)}$}
\begin{equation}
Z_{MAK}(u)=\left({\sum_{i=1}^d  \pk{\Ziu>u}}\right) \sum_{i=1}^d \mathbb{I}_{\{\kal{I}=i\}}  \frac{Z
_i(u)}{ \pk{X_i\aH{(u)}>u}},
\label{Strat}
\end{equation}
where $\mathbb{I}_{\{\cdot \}}$ is the indicator random variable \hH{and $Z_i(u)$ is \cDD{our modified Asmussen-Kroese}  estimator of $\Psi_i(u)$}. In view of Lemma   \ref{theorem:Stratification} in Appendix,
it is enough  \cDD{to show that $Z_i$ is an efficient} estimator for $\Psi_i(u)$.

Our novel Asmussen-Kroese estimator of the \aH{partial} max-sum probability $\Psi_j(u)$ is constructed by modifying the
classical Asmussen-Kroese estimator (see e.g., \cite{AsmRoj}). \cDD{We will assume that $A_u$ is chosen such that $Y_j=N_j$}. Essentially,
instead of condition\bH{ing} on $X_i(u), \aH{i\le d, i\not=j}$  like for Asmussen-Kroese estimator
we condition on $\aH{N_i,i\le d, i\not=j}$, \cH{which leads to the following} estimator \aH{(set $M(u)= \max_{i \le \bH{d}} \Xiu$)}
\bK{\begin{align*}
&\cK{Z_{j}}(u)=
\pk{ \aH{S(u)>u}, X_j(u) \aH{=} \aH{M(u)} 
\Bigl \lvert  \tH{\vk{N}}_{-j}} \\
 &=\pk{\left.\sum_{i=1}^d \lambda_i e^{\beta_i\gamma_u a_{ij} (u) N_j+\sum_{k \not = j} \beta_i \gamma_u a_{ik}\cH{(u)}N_k }>u,\aH{\lambda_j}
e^{\beta_{\aH{j}}\gamma_u a_{\aH{jj}}\cH{(u)}N_{\aH{j}}} = \max_{i \le d} \lambda_i  e^{\beta_i\gamma_u a_{ij} (u) N_j+\sum_{k \not = j} \beta_i \gamma_u a_{ik}\cH{(u)}N_k } \right|\hH{\vk{N}}_{-j}},
\end{align*}}
where $a_{ij}(u)$ is the $ij$th entry of the matrix $A_u$ and $\hH{\vk{N}}_{-j}=(N_1,\ldots,N_{j-1},N_{j+1},\ldots,N_d)$. 
Throughout in the sequel $\gamma_u,u>0$ are constants satisfying  $\limit{u} \gamma_u=\gamma\in (0,\IF)$ and
$\beta_i, \lambda_i$ are positive constants. \aH{For  $e(x),x\inr$ some function (to be specialized later) we define}
 \BQN\label{eq:eyj}
 e_i^*(u)&=&
  \beta_i\gamma_u u e\left( \lui^ {\frac 1 {\beta_i\gamma_u}}\right)\lui^ {-\frac 1 {\beta_i\gamma_u}}.
\EQN
\aK{The main result of this section is the next theorem which establishes the asymptotic properties of  $Z_{MAK}(u)$.}

\begin{theo} \label{theo:dom:AK} Define $J=\{j: \beta_{\aH{j}}\aH{=}\max_{i \le d} \beta_i\}$ and set $e(x)= x^{-1} \hH{\log (x)},x>0$.
If further for all $c>0$ and $\epsilon>0$ there exists $u_0>0$ such that for all $u>u_0$ and $\aH{i \not=j\in J }$
\BQN \label{beta:ey}
\sigma_{\aH{ij}}(u)+c \sqrt{\frac{1-\sigma_{\aH{ij}}(u)^2} {\log(u)}}
&\le & \cH{\frac{\beta_{\aH{j}}}{\beta_i}} \frac {\log(\epsilon e^*_{\cH{i}}(u))}{\log(u)},
\EQN
then the modified Asmussen-Kroese estimator $Z_{MAK}(u)$ of \bH{$\alpha(u)$} has \cH{asymptotically} vanishing relative error.
\end{theo}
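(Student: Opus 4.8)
The plan is to verify the two required limits in the definitions of logarithmic efficiency and vanishing relative error, reducing everything via \nelem{theorem:Stratification} to the study of each partial estimator $Z_j(u)$ and then, since the error comes from the dominant components, to the indices $j\in J$. First I would set up notation: conditionally on $\vk{N}_{-j}$, write $\Xju=\lambda_j e^{\beta_j\gamma_u N_j}$ (using $a_{jj}(u)=1$, which holds because $A_u$ is normalized so that $Y_j=N_j$ and $\SI_u$ is a correlation matrix) and $\Xiu=c_i(u,\vk{N}_{-j})e^{\beta_i\gamma_u a_{ij}(u)N_j}$ for $i\ne j$, where $c_i$ collects the $N_k$, $k\ne j$, contributions. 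Thus $Z_j(u)=\pk{S(u)>u,\ \Xju=\Mu\,\big|\,\vk{N}_{-j}}$ is, for each outcome of $\vk{N}_{-j}$, the probability that a single Gaussian variable $N_j$ lies in an explicit (possibly empty) interval; this is the conditional tail of a normal, hence can be written via $\overline\Phi$. The key observation is that, because $\beta_j\ge\beta_i$ with equality only on $J$, the event $\{\Xju=\Mu\}$ forces $N_j$ to be large (of order $\log(u)/(\beta_j\gamma_u)$), and on that event $S(u)\approx\Xju$, so $Z_j(u)\approx\overline\Phi\!\big(\tfrac{\log(u/\lambda_j)}{\beta_j\gamma_u}\big)=\pk{\Xju>u}$ up to correction terms.

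The core estimates are then: (i) an upper bound $\E{Z_j(u)^2}\le \E{Z_j(u)}\cdot\sup(\text{conditional probability})$, which reduces the second-moment bound to controlling, uniformly in $\vk{N}_{-j}$ that are "not too large," the ratio of the conditional interval-probability to the unconditional tail $\pk{\Xju>u}$; and (ii) showing the contribution of "large" $\vk{N}_{-j}$ is asymptotically negligible. For (i), I would parametrize the threshold for $N_j$ and use the Mills-ratio asymptotics $\overline\Phi(x)\sim \phi(x)/x$ together with the fact that the maximum condition $\Xju\ge\Xiu$ for $i\ne j$ translates, after taking logs, into a lower bound on $N_j$ of the form $N_j\ge \frac{\beta_i a_{ij}(u)-\beta_j}{\cdots}N_j+(\text{terms in }\vk{N}_{-j})$; here the hypothesis \eqref{beta:ey} is exactly what is needed to guarantee that this lower bound, measured against the unconditional level $\log(u/\lambda_j)/(\beta_j\gamma_u)$, does not exceed it by more than a quantity that washes out after dividing by $\log\alpha(u)$. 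The function $e(x)=x^{-1}\log x$ and the quantity $e_i^*(u)$ in \eqref{eq:eyj} enter precisely through the "slowly varying" correction $u+X_d(u)-S(u)$-type shift in the Asmussen--Kroese construction, i.e., the gap between $u$ and the value of $\Xju$ at which the sum hits $u$; a Taylor/asymptotic expansion of $\log\pk{\Xju>u-(\text{rest})}$ around $\log\pk{\Xju>u}$ produces the factor $e_i^*(u)$.

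For the vanishing-relative-error part I would then sharpen (i) to a genuine asymptotic equivalence: show $\E{Z_j(u)^2}/\Psi_j(u)^2\to 1$ for $j\in J$ and that $\Psi_j(u)/\alpha(u)\to$ the correct weight, so that after applying \nelem{theorem:Stratification} the stratified estimator $Z_{MAK}(u)$ satisfies $\E{(Z_{MAK}(u)/\alpha(u))^2}\to 1$; the indices $j\notin J$ contribute a vanishing fraction because $\pk{\Xju>u}=o(\pk{X_{j'}(u)>u})$ for $j'\in J$, by comparing the Gaussian tails at levels $\log(u)/(\beta_j\gamma_u)$ versus $\log(u)/(\beta_{j'}\gamma_u)$. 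I expect the main obstacle to be the uniform control in (i)–(ii): one must split the range of $\vk{N}_{-j}$ at a level growing like $\sqrt{c\log u}$ (this is the origin of the $c\sqrt{(1-\sigma_{ij}^2)/\log u}$ term in \eqref{beta:ey}), show that beyond that level the conditional probabilities are small enough that their second moment is negligible even without cancellation, and show that within that level the maximum-condition interval for $N_j$ is contained in (a slight enlargement of) $\{N_j>\log(u/\lambda_j)/(\beta_j\gamma_u)\}$ so that the conditional probability never overshoots the unconditional tail by more than a $1+o(1)$ factor. Handling the case where $h$ is non-monotone in $N_j$ (two-sided intervals, the $S1$–$S3$ trichotomy of the introduction) for $i\notin J$ requires a separate, easier, bound since those terms are lower order anyway.
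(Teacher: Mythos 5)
Your proposal follows essentially the same route as the paper's own proof: apply \nelem{theorem:Stratification} to reduce to the per-index estimators $Z_j(u)$, split the conditioning variables $\vk{N}_{-j}$ at a threshold of order $\sqrt{\log u}$, and on the ``bounded'' region use the hypothesis \eqref{beta:ey} together with the scaling $e_j^*(u)$ to absorb $\sum_{i\ne j}X_i(u)$ into an $O(\epsilon e_j^*(u))$ shift of the level $u$, showing the conditional probability never exceeds $\pk{X_j(u)>u-c_2\epsilon e_j^*(u)}$; the unbounded region and the indices $j\notin J$ are handled by exactly the negligibility arguments you sketch. The one small divergence is your step (i), bounding $\E{Z_j^2\mathbb{I}_A}\le\E{Z_j}\sup_A Z_j$, whereas the paper takes $\E{Z_j^2\mathbb{I}_A}\le(\sup_A Z_j)^2$; your version additionally requires the fact $\Psi_j(u)\sim\pk{X_j(u)>u}$ for $j\in J$ (which does hold here, and is needed anyway to combine the per-index bounds in \nelem{theorem:Stratification}), so this is a harmless cosmetic difference. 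The digression about the $S1$--$S3$ trichotomy is irrelevant to this theorem, which is purely log-Gaussian, but it does no damage.
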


\COM{
On the other hand
 if $\beta_j < \max_{ i\le d, i\not=j } \beta_i$, then for all $u$ large
$$
Z_{1}(u) \le \pk{\lambda_1  e^{\beta_1\gamma_u N_1}>\frac ud }
$$
implying
$$
\limsup_{u\to\infty}\frac{ \left(\mean{Z_{i}(u)^2}\right)}{ \Psi_1(u) \pk{S(u)>u}}=0,
$$
and hence it follows  from Lemma  \ref{theorem:Stratification} $Z_{MAK}(u)$ has \aH{asymptotically} vanishing relative error, i.e., $$\limit{u}  \frac{\E{Z^2_{MAK}(u)}}{  \alpha(u)}=0.$$
}


\begin{remark}
a) Condition \eqref{beta:ey} is forced only \eH{when} $\liminf_{u\to \IF} \sigma_{ij}(u)=1$, since
when $\sigma_{ij}(u) \le \rho< 1$ for all $u$ large \eqref{beta:ey} is satisfied for any $\aH{c}>0$.\\
\COM{ In the other case
$\limit{u}\sigma_{1i}(u)=1$, then
$$ \sigma_{1i}(u)+c \sqrt{\frac{1-\sigma_{1i}(u)^2} {\log(u)}} = 1 - \sqrt{1- \sigma_{1i}(u)} \Bigl[ 1+ c \sqrt{2}(1+o(1))
 \sqrt{ \frac{1- \sigma_{1i}(u)}{\log u}}\Bigr], \quad u\to \infty.$$
}
b)
\eH{In order} to evaluate $Z_{\bH{j}}(u)$ we have to modify the matrix $A_u$ in such a way that $Y_{j}=N_j$  which means that for every $j$ we have to compute a Cholesky factorization of a matrix.
Further  we need \tH{to determine $x$} satisfying the equation $\sum_{i=1}^d c_i e^{d_i x}=u$.
As shown in Rojas-Nandayapa (2008) 
\aH{such an} $x$ can be quite efficiently found by Newton\aH{'s} method.\\
c) The recent paper Kortschak (2011) derives second-order asymptotic results for dependent risks with regularly varying tails.
Similar results for our framework where risks have distributions in the Gumbel MDA (and therefore have no regularly varying tails),
 will be derived in a forthcoming manuscript.
\end{remark}

\section{The modified \cH{Rojas-Nandayapa} estimator}

An key result of this section is Theorem \ref{mytheo3} below, which motivates a modification of
the  algorithm of Rojas-Nandayapa (2008). Our novel modified Rojas-Nandayapa estimator introduced in \eqref{RN} is
\aH{\hH{logarithmically} efficient, and moreover behaves asymptotically significantly better than the original one.} Specifically, \aH{our algorithm is constructed under the following} modifications:
\def\vcT{\bH{\vk{\Theta}}}
\begin{itemize}
 \item[i)] As for  Asmussen-Kroese estimator we condition on \hH{the element which is} the maximum.
 \item[ii)] We use importance sampling on $ \vcT :=A_u \vk{U}$.
 \item[iii)] We \hH{employ} the same stratification method as in Eq.\ \eqref{Strat}.
\end{itemize}
\tH{We note in passing that  Rojas-Nandayapa (2008) considers only the case that $A_u$ is constant in $u$.}\\
For a given index $j$ assume that $A_u$ is chosen in such a way that $\bH{\Theta_j} =(A_u \vk{U})_j=\bH{U}_j$. We will only change the distribution of $\bH{\Theta}_j$
\cDD{which possesses the}  probability density function (pdf)
\BQN\label{dtheta}
f(\aH{\theta})&=&
\frac{\Gamma(d/2)}{\sqrt{\pi} \Gamma((d-1)/2)} (1-\aH{\theta}^2)^{\frac{d-3}2}, \quad \aH{\theta}\in (-1,1),
\EQN
\tH{where   $\Gamma(\cdot)$ is the Euler gamma function. We write  $f_{IS}$ for the corresponding pdf of $\Theta_j$} under the importance sampling measure. We then use the estimator
$$
\hat Z_j(u)=\pb{S(u)>u,X_j(u)=\aH{ M(u)} 
\Bigl \lvert   \vcT } \frac{f(\aH{\Theta_j})}{f_{IS}(\aH{ \Theta_j})}
$$
to estimate $\Psi_j(u)$ and
\BQN \label{RN}
Z_{RN}(u)=\left({\sum_{i=1}^d  \pk{\Ziu>u}}\right) \sum_{i=1}^d \mathbb{I}_{\{\kal{I}=i\}}  \frac{\hat Z
_i(u)}{ \pk{X_i>u}}
\EQN
\eH{as an estimator for $ \alpha(u)=\pk{S(u)>u}$}. As in Section \ref{section:AK} we only have to show that the estimators $\hat Z_j(u)$ are asymptotically efficient.

For our investigations we  shall assume that the distribution function $F$ of $R$ with infinite upper endpoint,
belongs to the Gumbel \aH{MDA} with some positive scaling function $\nu$, i.e.,
\BQN \label{eq:rdfd:2}
\limit{u}
\frac{1- F(u+x \bH{\nu}( u))} {1- F(u)} &=& \exp(-x),\quad \forall x\inr,
\EQN
which we abbreviated hereafter as  $F \in GMDA(\nu)$ or $R \in GMDA(\nu)$. We suppose in the following that
\BQN \label{eq:main:e}
\limit{u}  \bH{\nu}( u) = 0,\quad \text{and}\quad \limit{u} u   \dH{\nu}( \log(u)) =\infty.
\EQN

  \BT \label{mytheo3} Suppose that \eqref{eq:main:e} holds.
  \aH{If further, for  $j$  with $\beta_j=\max_{1\le i\le d}\beta_i$  \bH{condition \eqref{beta:ey} is satisfied} for any $i\not=j, i\le d$},
  then we have
  \BQN\label{mytheo3:a}
  \PP\left(\SSU>u\right)&\sim & \sum_{i=1}^d \PP\left( \Ziu>u\right).
  \EQN
  \ET
\begin{remark}\label{remark:conditionrho}
a) The sum in \eqref{mytheo3:a} can be   reduced to the sum over the indices $i$ \hH{such that} $\beta_i=\max_{1\le j\le d}\beta_j$ and $\lambda_i=\max_{j:\beta_j=\beta_i} \lambda_j$.\\
b) The scaling function $ \bH{\nu}( \cdot)$ is asymptotically equivalent to the mean excess function $\E{(R- x) \lvert R>x}$.\\
If $\beta_i=\beta_j$ and $\lim_{u\to\infty} \log(e_j^*(u))/\log(u)=1$, then \eqref{beta:ey} is for example fulfilled when ($\sigma_{ij}(u)<1$)
$$
\limsup_{ u\to \infty} \frac{-\log\left(\frac{e^*_j(u)}{u}\right) }{(1-\sigma_{ij}(u))\log(u)} <1.
$$
Note that  $e_j^*(\cdot)$ above is defined in \eqref{eq:eyj} where  $e(u)= u  \bH{\nu}( \log(u))$.
\end{remark}

We shall consider in the following importance sampling pdf $f_{IS}$ given by
\BQN
f_{IS}(a,b,x)=2^{-(a+b-1)}\frac{\Gamma(a+b)}{\Gamma(a)\Gamma(b)}(1+x)^{a-1}(1-x)^{b-1}, \quad x\in [-1,1],
\EQN
with $a,b$ positive constants. For our numerical results we choose \bH{the constant} $a$ to be large, say equal to \bK{10}.
Next set
\BQN\label{ddm}
beta=\max_{i\le d} \beta_i, \quad \lambda=\max_{i:\beta_i=\beta} \lambda_i, \quad d_m=|\{i: \beta_i=\hH{\beta}, \lambda_i=\lambda\}|.
\EQN
Whenever \dH{the index} $i$ is such that  $\beta_i=\hH{\beta}$ and $\lambda_i=\lambda$ we define $e^*(u):=\bH{e_i^*}(u)$.

\begin{theo}\label{lemma:rojas1}
Let the  assumptions of Theorem \ref{mytheo3} be fulfilled. Further assume that the function $e(u)=\bH{u \nu(\log u)}$ is of bounded variation i.e., for all $c>0$
\begin{equation}\label{bed:rojas}
\limsup_{u\to\infty}\frac{e(c u)}{e(u)}<\infty.
\end{equation}
If the importance sampling pdf $f_{IS}$ has parameters $a>0$ and $b=\beta(u)=\bH{\log(u)}/\log(\bH{u/\dH{e^*}(u)}),$ then
\BQN
\frac{\mean{\hat Z_{RN}^2(u)}}{ \PP\left(  S(u)>u \right)^2} \sim    \frac{e \Gamma(d/2) } {2\sqrt{\pi}\Gamma((d-1)/2) }\log \left( \frac{u\log(u)}{\bH{e^*(u)}} \right).
\EQN
\end{theo}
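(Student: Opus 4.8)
The plan is to combine the stratification identity with a Laplace-type asymptotic analysis of the per-index second moments, working conditionally on $\vk{\Theta}=A_u\vk{U}$. First I would reduce. By Lemma~\ref{theorem:Stratification}, since $\pk{\kal{I}=i}=\PP(X_i(u)>u)/\sum_{j=1}^d\PP(X_j(u)>u)$, the estimator in \eqref{RN} satisfies
\[
\mean{Z_{RN}^2(u)}=\Bigl(\sum_{j=1}^d\PP(X_j(u)>u)\Bigr)\sum_{i=1}^d\frac{\mean{\hat Z_i^2(u)}}{\PP(X_i(u)>u)},
\]
while Theorem~\ref{mytheo3} and Remark~\ref{remark:conditionrho}(a) give $\PP(S(u)>u)\sim\sum_{j}\PP(X_j(u)>u)\sim d_m\,\PP(X(u)>u)$, where $X(u)$ denotes any of the $d_m$ maximal risks (those $i$ with $\beta_i=\beta$, $\lambda_i=\lambda$, cf.\ \eqref{ddm}), all with the same tail. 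Hence it suffices to show, for one maximal $i$,
\[
\frac{\mean{\hat Z_i^2(u)}}{\PP(X_i(u)>u)^2}\sim\frac{e\,\Gamma(d/2)}{2\sqrt{\pi}\,\Gamma((d-1)/2)}\,\log\Bigl(\frac{u\log(u)}{e^*(u)}\Bigr),
\]
together with $\mean{\hat Z_i^2(u)}/\PP(X_i(u)>u)=o(\PP(X(u)>u))$ for non-maximal $i$; the latter follows from \eqref{beta:ey} by the same tail bounds that render such indices negligible in the proof of Theorem~\ref{mytheo3}.

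For the single-index statement, fix a maximal $i$, write $\Theta_i=U_i$ with density $f$ from \eqref{dtheta}, recall $X_k(u)=\lambda_k e^{\beta_k\gamma_u R\Theta_k}$ with $R\sim F$ independent of $\vk{\Theta}$, and set $q_i(u,\vk{\theta})=\pb{S(u)>u,\,X_i(u)=M(u)\mid\vk{\Theta}=\vk{\theta}}$, $w=f/f_{IS}$, so that the change of measure yields $\mean{\hat Z_i^2(u)}=\mean{q_i(u,\vk{\Theta})^2\,w(\Theta_i)}$ under the original law. On $\{q_i>0\}$ the equation $S(u)=u$ has, for large $u$, a single relevant root $\psi_i(u,\vk{\Theta})$ in $R$, whence $q_i(u,\vk{\theta})=\overline{F}(\psi_i(u,\vk{\theta}))$ up to corrections that \eqref{beta:ey} makes negligible on the scale of $\nu$; moreover, with $c_i(u)=\log(u/\lambda_i)/(\beta_i\gamma_u)$ and $\theta_i=1-s$, one has $\psi_i(u,\vk{\theta})=c_i(u)/(1-s)$ up to lower-order terms, the remaining coordinates concentrating at $\sigma_{ki}(u)$ on the slice $\{\Theta_i=1-s\}$. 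I would then insert the $GMDA(\nu)$ relation $\overline{F}(c_i(u)/(1-s))\sim\overline{F}(c_i(u))\exp(-s\,c_i(u)/\nu(c_i(u)))$ and the local expansions of $f$ and $f_{IS}$ near $\theta=1$ (the latter governed by the parameters $a$ and $b=\beta(u)$), perform the Laplace-type integration in $s$, and carry out the analogous computation for $\PP(X_i(u)>u)=\int_0^1\overline{F}(c_i(u)/\theta)f(\theta)\,d\theta$. In the ratio the $\overline{F}(c_i(u))^2$ and the $(\nu(c_i(u))/c_i(u))$-powers cancel; the bounded-variation hypothesis \eqref{bed:rojas} absorbs slowly varying shifts of the argument of $\nu$ — using $e_i^*(u)\sim\beta_i\gamma_u u\,\nu(\log(u)/(\beta_i\gamma_u))$ from \eqref{eq:eyj}, so that $\nu(c_i(u))/c_i(u)\asymp e^*(u)/(u\log u)$ — and the growth of $b=\beta(u)$, together with $C_{IS}=2^{-(a+\beta(u)-1)}\Gamma(a+\beta(u))/(\Gamma(a)\Gamma(\beta(u)))$, Stirling's formula, and a limit of the form $(1+x/\beta(u))^{\beta(u)}\to e^{x}$, produces the Euler constant $e$ and the factor $\log(u\log(u)/e^*(u))$, leaving the coefficient $e\,\Gamma(d/2)/(2\sqrt{\pi}\Gamma((d-1)/2))$. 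Summing the $d_m$ equal maximal contributions and dividing by $\PP(S(u)>u)^2\sim d_m^2\PP(X(u)>u)^2$ then gives the theorem.

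The hard part will be making this Laplace estimate rigorous, because when $\beta(u)$ grows with $u$ the likelihood ratio $w$ is only marginally integrable near $\Theta_i=1$: the effective integrand is a product of an exponential factor from $q_i^2$, a power of $s=1-\Theta_i$ from $f^2/f_{IS}$ whose exponent involves $\beta(u)$, and the slicewise fluctuations of the subdominant coordinates controlled by \eqref{beta:ey}, and one must pick the truncation level correctly and quantify the competition between these factors to see that the net contribution is exactly a logarithm (rather than something growing polynomially) with the stated coefficient. A secondary technical point is to justify that the non-maximal strata, and within each slice the sub-event $\{X_i(u)\ne M(u)\}$, are genuinely negligible; this reuses the estimates behind Theorem~\ref{mytheo3} and, again, the bounded-variation condition \eqref{bed:rojas}.
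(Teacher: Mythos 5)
Your overall plan — reduce via the stratification identity (Lemma~\ref{theorem:Stratification}) to the $d_m$ maximal indices, then do a Laplace-type analysis of $\mean{\hat Z_i^2(u)}$ localised near $\Theta_i=1$ after the change of measure — is the same route the paper takes, and your bookkeeping showing that the ratio reduces to $\mean{\hat Z_i^2(u)}/\PP(X_i(u)>u)^2$ for a single maximal $i$ is correct. However, the proposal stops precisely at the point that carries all the content of the theorem, so as written it does not prove the statement.

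Concretely, what is missing is the actual execution of the Laplace estimate. The paper's proof hinges on the explicit substitution $\theta=\log(u)/\bigl(\log(u)+\log(1+x e_j^*(u)/u)\bigr)$ (already prepared in Lemma~\ref{remarkunivariateasymptotic}), which turns the integral over $\theta\in[a(u),1]$ into an integral over $x\in[0,\infty)$ against $e^{-2x}x^{(d-3)/2+(d-1)/2}\,dx$; the weight $f/f_{IS}$ is then evaluated along this curve, and the specific choice $b=\beta(u)=\log(u)/\log(u/e^*(u))$ is tuned exactly so that the factor $\bigl(2\,(1-\theta)^{-1}\bigr)^{\beta(u)}$ combines with $B(a,\beta(u))$ and the $(e^*(u)/(u\log u))^{(d-1)/2}$ power to produce a quantity $\sim$ (constant)$\cdot\log\bigl(u\log(u)/e^*(u)\bigr)$. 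Your sketch names all the ingredients ($\overline F(c_i(u)/(1-s))$ via $GMDA(\nu)$, the local shape of $f$ and $f_{IS}$ near $\theta=1$, Stirling and the limit $(1+x/\beta(u))^{\beta(u)}\to e^x$), but never verifies that with this particular $\beta(u)$ the competing powers actually cancel down to a logarithm with the stated coefficient $e\,\Gamma(d/2)/(2\sqrt{\pi}\Gamma((d-1)/2))$. That cancellation is the theorem; without it one has only an order-of-magnitude statement, not the precise asymptotic equivalence claimed. You also do not treat the complementary range $\Theta_j\in[-1,a(u)]$ (the paper handles this by the argument of Lemma~\ref{remarkunivariateasymptotic2}, where condition \eqref{bed:rojas} is actually used), and the reduction of $q_i(u,\vk\theta)$ to $\overline F$ of a single root requires Corollary~\ref{cor:last} together with the two-sided bound developed in the proof of Theorem~\ref{mytheo3}, which you invoke only in passing.

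One further point worth flagging, because it bears on whether your heuristic for the Euler factor $e$ is even self-consistent: since condition \eqref{beta:ey} forces $\log e^*(u)/\log u\to1$, the denominator $\log(u/e^*(u))=o(\log u)$ and hence $\beta(u)\to\infty$. You implicitly assume this (you invoke $(1+x/\beta(u))^{\beta(u)}\to e^x$ and call $b=\beta(u)$ "growing"), whereas the paper's text at one step appeals to $B(a,\beta)\sim 1/\beta$ as $\beta\to0$, which does not apply. If you carry out the calculation, you need to track carefully which of $2^{\beta(u)}$, $\bigl(u\log u/(xe^*(u))\bigr)^{\beta(u)}$, and $B(a,\beta(u))$ is compensating which; a rough "the Beta and Stirling terms give the $e$" is not enough to certify the constant. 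Until those estimates are carried through, the proposal is a plausible plan rather than a proof.
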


 \begin{remark}\label{forappendix} a) In the \eH{log-Gaussian} case it follows that the standard error  $\sqrt{\frac{\mean{\hat Z_{RN}^2(u)}}{ \PP\left(  S(u)>u \right)^2}}$ is of order $\sqrt{\log(\log(u))}$ and hence it remains small \bK{even for  relatively} large values  $u$.\\
b) For the original Rojas-Nandayapa estimator \eH{$Z_R(u)$ defined in (3.8)} we \eH{obtain} under the same conditions  as in \netheo{lemma:rojas1} that (recall $d_m$ is defined in \eqref{ddm})
 $$
\frac{ \mean{Z_{R}^2(u)}}{ \PP\left(  S(u)>u \right)^2}    \gtrsim \frac 1{d_m} \frac{2\sqrt{\pi}} {\Gamma(d/2)} \left(\frac{u\log(u)}{\bH{e^*(u)}}\right)^{\bH{\frac{d-1} 2}}.
$$
\aK{In the log-\dH{Gaussian} case it follows that the standard error   is of order $\log(u)^{\frac{d-1} 4}$ and hence significantly bigger \hH{than} for the modified estimator.}
 \end{remark}

\section{Numerical examples}\label{section:numerics}
In this section we present some examples on rare-event estimation. In order to compare our results, we \zH{refer} to the examples of Asmussen et al.\ (2011). \zH{Specifically}, we consider the
case of a multivariate \dH{log-Gaussian} distribution with $d=10$,
$$\mu_i=i-10, \quad \sigma_{ii}^2=i, \quad \zH{i \le d}$$
 and
 $$\rho_{ij}\in\{0,0.4,0.9\}, \quad u\in \{20000,40000,500000\}.$$
\zH{In order to obtain} reliable estimates for the variance we \dH{performed}  $10^7$ simulations for each \zH{proposed} estimator.
Beside the standard error  ($\sqrt{\mathbb V\text{ar}\{Z(u)\}}$) and the coefficient of variation  $\sqrt{\mathbb V\text{ar}Z(u)}/\mean{Z(u)}$ we also provide the needed time for the evaluation (for $5*10^5$ simulations since this is the number of simulations used in Asmussen et al.\ (2011), computations where \dH{carried out} in R \cite{R}) and the Efficiency defined by
\[
 \frac{V\text{ar} \{\text{CMC-estimator}\}\times \text{Computation-time}\{\text{CMC-estimator}\}}{V\text{ar} \{\text{Estimator}\}\times \text{Computation-time}\{\text{Estimator}\}} .
\]

We compare our estimators to the Crude Monte Carlo estimator $Z_{CMC}=\mathbb{I}_{\{S(u)>u\}}$ and the importance sampling
estimators defined in the aforementioned paper (compare Section \ref{section:formerresults}).\\

\bigskip
\underline{\bf $\rho=0$}:
\cDD{ In this case the by far best  estimator is the novel modified Asmussen-Kroese estimator (MAK) that corresponds in this case to the classical Asmussen-Kroese estimator;} in Table \ref{tab02} for example it outperforms
the other estimators by a factor of $100$. Further the performance of the modified Rojas-Nandayapa estimator lies between the one of
the IS respectively IS-CE and ISVE respectively ISVE-CE.\\
 Comparing our implementation with \zD{that} of Asmussen et al.\ (2011) we see that
our implementation of the estimators IS-CE, ISVE and ISVE-CE is considerably slower. \zH{However},
for other values of $\rho$ our implementation of  ISVE and ISVE-CE is more efficient. Perhaps in Asmussen et al.\ (2011) a
different implementation for the independent case was used. For IS-CE we do not have a plausible explanation why our estimator is slower,
but it only shows that the used implementation of an estimator can be important for the comparison with other estimators.
If we compare the standard errors we see that there can be considerable differences. Here one should note that the standard error is
only estimated and hence can only be estimated with a certain amount of uncertainty. Since we used considerably more simulations than in
Asmussen et al. (2011) we will assume that our results are more accurate.\\
\zH{In order to} get an idea for the uncertainty involved,  one can consider  Table
\ref{tab03} and the results for estimator IS-CE. We see that although the reported standard error is small the error of the
estimation is relatively large, which suggest that the distribution of IS-CE is rather skewed. \zH{Therefore}, one should mistrust the
standard error for this particular estimator.\\
 Since the comparison \zH{of our findings with those in}  Asmussen et al.
(2011) for the other values of $\rho$ is similar as for $\rho=0$ we will concentrate next on our numerical results.\\

\underline{\bf $\rho=0.4$}: In this case we see that our modified Rojas-Nandayapa (RN) estimator has standard error that is comparable to the one of ISVE-CE which is the best of the estimators in Asmussen et. al. (2011). However the RN estimator suffers from a long computation time and hence in practice the ISVE-CE estimator is still preferable. On the other hand we see that our \cDD{MAK estimator} is by far the best in terms of standard variation as well as in terms of efficiency.  We have a speed up to a factor $8$ for $u=20000$ to a factor of $33$ for $u=500000$. \\

\underline{\bf $\rho=0.9$}: We observe that all estimators decrease there performance. Our RN estimator has standard error that is comparable to the one of ISVE-CE which is the best estimator in Asmussen et. al. (2011). As explained above, RN estimator suffers from a long computation time.
  Similarly, our \cDD{MAK estimator} is by far the best in terms of standard variation as well as in terms of efficiency;  we have a speed up of  a factor $2$ for $u=20000$ to a factor of $5$ for $u=500000$. \\
Summarizing, our numerical findings show that the novel MAK estimator proposed in this paper is by far the best from the considered ones. Since the efficiency of MAK in the above examples \cDD{is} at least a factor $2$ better than for the other estimators,
which  means that the evaluation time of $\alpha(u)$  for a given precision is at most half as long as for the other estimators,
\zH{our estimator shows clear advantages for practical applications.}
\begin{table}
\centering
\begin{tabular}{|c||c|c|c|c|c|}
 \hline
Method &Estimation &Standard error &Variation coeff. &Time& Efficiency\\\hline
RN & $0.00102$ & $0.000914$ & $0.892$ & $67.6$ & $34.1$\\\hline
MAK & $0.00102$ & $2.81e-05$ & $0.0275$ & $40.1$ & $60700$\\\hline
IS & $0.00102$ & $0.0166$ & $16.2$ & $2.82$ & $2.49$\\\hline
IS-CE & $0.00344$ & $7.41$ & $2150$ & $13$ & $2.7e-06$\\\hline
ISVE & $0.00102$ & $0.000472$ & $0.461$ & $14$ & $620$\\\hline
ISVE-CE & $0.00102$ & $0.00024$ & $0.235$ & $14.5$ & $2300$\\\hline
CMC & $0.00104$ & $0.0322$ & $31$ & $1.86$ & $1$\\\hline
\end{tabular}

\caption{ $\rho=0$, $u=20000$}
\end{table}
\notindraft{
\begin{table}
\centering

\begin{tabular}{|c||c|c|c|c|c|}
 \hline
Method &Estimation &Standard error &Variation coeff.&Time &Efficiency\\\hline
RN & $0.000463$ & $0.000415$ & $0.897$ & $66$ & $75.7$\\\hline
MAK & $0.000463$ & $9.11e-06$ & $0.0197$ & $39.5$ & $264000$\\\hline
IS & $0.000465$ & $0.00963$ & $20.7$ & $2.79$ & $3.33$\\\hline
IS-CE & $0.000415$ & $0.013$ & $31.4$ & $12.9$ & $0.396$\\\hline
ISVE & $0.000463$ & $0.000197$ & $0.425$ & $14$ & $1590$\\\hline
ISVE-CE & $0.000463$ & $0.00023$ & $0.496$ & $14.6$ & $1120$\\\hline
CMC & $0.000464$ & $0.0215$ & $46.4$ & $1.86$ & $1$\\\hline
\end{tabular}
\caption{ $\rho=0$, $u=40000$\label{tab02}}
\end{table}

 \begin{table}
\centering
\begin{tabular}{|c||c|c|c|c|c|}
 \hline
Method &Estimation &Standard error &Variation coeff.&Time &Efficiency\\\hline
RN & $1.79e-05$ & $1.82e-05$ & $1.01$ & $63$ & $1560$\\\hline
MAK & $1.8e-05$ & $7.93e-08$ & $0.00442$ & $39.5$ & $1.31e+08$\\\hline
IS & $1.83e-05$ & $0.000879$ & $48.1$ & $2.69$ & $15.6$\\\hline
IS-CE & $1.67e-05$ & $3.62e-05$ & $2.17$ & $12.6$ & $1960$\\\hline
ISVE & $1.79e-05$ & $1.57e-06$ & $0.0872$ & $14$ & $946000$\\\hline
ISVE-CE & $1.79e-05$ & $2.27e-07$ & $0.0127$ & $14.7$ & $42800000$\\\hline
CMC & $1.75e-05$ & $0.00418$ & $239$ & $1.85$ & $1$\\\hline
\end{tabular}
 \caption{ $\rho=0$, $u=500 000$\label{tab03}}
\end{table}

\begin{table}
\centering
\begin{tabular}{|c||c|c|c|c|c|}
 \hline

Method &Estimation &Standard error &Variation coeff.&Time &Efficiency\\\hline
RN & $0.00105$ & $0.000954$ & $0.908$ & $70.2$ & $32.2$\\\hline
MAK & $0.00105$ & $0.000146$ & $0.139$ & $46.1$ & $2090$\\\hline
IS & $0.00105$ & $0.0168$ & $16.1$ & $2.92$ & $2.48$\\\hline
IS-CE & $0.00104$ & $0.0236$ & $22.8$ & $12.8$ & $0.288$\\\hline
ISVE & $0.00105$ & $0.00271$ & $2.58$ & $14.1$ & $19.9$\\\hline
ISVE-CE & $0.00105$ & $0.00073$ & $0.695$ & $14.6$ & $265$\\\hline
CMC & $0.00105$ & $0.0324$ & $30.8$ & $1.96$ & $1$\\\hline
\end{tabular}

\caption{ $\rho=0.4$, $u=20000$}
\end{table}
\begin{table}
\centering

\begin{tabular}{|c||c|c|c|c|c|}
 \hline
Method &Estimation &Standard error &Variation coeff. &Time &Efficiency\\\hline
RN & $0.000473$ & $0.000428$ & $0.906$ & $69.9$ & $70.4$\\\hline
MAK & $0.000473$ & $5.66e-05$ & $0.12$ & $45.4$ & $6220$\\\hline
IS & $0.000468$ & $0.00956$ & $20.4$ & $2.87$ & $3.44$\\\hline
IS-CE & $0.000479$ & $0.0569$ & $119$ & $12.9$ & $0.0217$\\\hline
ISVE & $0.000472$ & $0.00123$ & $2.6$ & $14$ & $42.9$\\\hline
ISVE-CE & $0.000472$ & $0.000402$ & $0.85$ & $14.5$ & $385$\\\hline
CMC & $0.000471$ & $0.0217$ & $46.1$ & $1.92$ & $1$\\\hline
\end{tabular}
\caption{ $\rho=0.4$, $u=40000$}
\end{table}

 \begin{table}
\centering
\begin{tabular}{|c||c|c|c|c|c|}
 \hline
Method &Estimation &Standard error &Variation coeff. &Time &Efficiency\\\hline
RN & $1.81e-05$ & $1.83e-05$ & $1.01$ & $68.2$ & $1540$\\\hline
MAK & $1.81e-05$ & $1.15e-06$ & $0.0637$ & $42.6$ & $623000$\\\hline
IS & $1.8e-05$ & $0.000861$ & $47.9$ & $2.78$ & $17.1$\\\hline
IS-CE & $1.76e-05$ & $0.00107$ & $61.1$ & $13$ & $2.35$\\\hline
ISVE & $1.81e-05$ & $3.54e-05$ & $1.96$ & $14.1$ & $2000$\\\hline
ISVE-CE & $1.81e-05$ & $1.14e-05$ & $0.629$ & $14.8$ & $18500$\\\hline
CMC & $1.82e-05$ & $0.00427$ & $234$ & $1.94$ & $1$\\\hline
\end{tabular}
 \caption{ $\rho=0.4$, $u=500 000$}

\end{table}

\begin{table}
\centering
\begin{tabular}{|c||c|c|c|c|c|}
 \hline
Method &Estimation &Standard error &Variation coeff. &Time&Efficiency\\\hline
RN & $0.00113$ & $0.0012$ & $1.06$ & $84.6$ & $18.1$\\\hline
MAK & $0.00113$ & $0.000493$ & $0.437$ & $58.3$ & $155$\\\hline
IS & $0.00113$ & $0.0188$ & $16.6$ & $2.89$ & $2.15$\\\hline
IS-CE & $0.00113$ & $0.0022$ & $1.95$ & $13.1$ & $34.9$\\\hline
ISVE & $0.00113$ & $0.0096$ & $8.49$ & $14.1$ & $1.69$\\\hline
ISVE-CE & $0.00113$ & $0.00155$ & $1.38$ & $15$ & $60.7$\\\hline
CMC & $0.00112$ & $0.0335$ & $29.8$ & $1.96$ & $1$\\\hline
\end{tabular}

\caption{ $\rho=0.9$, $u=20000$}
\end{table}
\begin{table}
\centering

\begin{tabular}{|c||c|c|c|c|c|}
 \hline
Method &Estimation &Standard error &Variation coeff.  &Time&Efficiency\\\hline
RN & $0.000519$ & $0.000542$ & $1.04$ & $83.8$ & $41.1$\\\hline
MAK & $0.000519$ & $0.000215$ & $0.414$ & $57.6$ & $381$\\\hline
IS & $0.000515$ & $0.0108$ & $21.1$ & $2.85$ & $3.02$\\\hline
IS-CE & $0.000519$ & $0.00105$ & $2.02$ & $13$ & $71$\\\hline
ISVE & $0.000519$ & $0.00545$ & $10.5$ & $14.1$ & $2.41$\\\hline
ISVE-CE & $0.000519$ & $0.000706$ & $1.36$ & $14.9$ & $136$\\\hline
CMC & $0.000519$ & $0.0228$ & $43.9$ & $1.95$ & $1$\\\hline
\end{tabular}
\caption{ $\rho=0.9$, $u=40000$}
\end{table}

 \begin{table}
\centering
\begin{tabular}{|c||c|c|c|c|c|}
 \hline
Method &Estimation &Standard error &Variation coeff.&Time &Efficiency\\\hline
RN & $2.08e-05$ & $2.29e-05$ & $1.11$ & $81.8$ & $1010$\\\hline
MAK & $2.08e-05$ & $7.22e-06$ & $0.348$ & $54.8$ & $15200$\\\hline
IS & $1.95e-05$ & $0.000915$ & $46.8$ & $2.78$ & $18.7$\\\hline
IS-CE & $2.08e-05$ & $4.91e-05$ & $2.37$ & $12.7$ & $1420$\\\hline
ISVE & $2.1e-05$ & $0.000476$ & $22.7$ & $14$ & $13.7$\\\hline
ISVE-CE & $2.08e-05$ & $3.19e-05$ & $1.53$ & $14.7$ & $2900$\\\hline
CMC & $2.28e-05$ & $0.00477$ & $209$ & $1.91$ & $1$\\\hline
\end{tabular}
 \caption{ $\rho=0.9$, $u=500 000$}

\end{table}
}
\section{Proofs}
We prove next a lemma which is of independent interest, and then continue with the proofs of the main results.

\begin{lem}
\cE{ Let $\U$ be uniformly distributed on the unit sphere of $\R^d,d\ge2$ and $\Sigma=A A^\top$ be a correlation matrix ($\sigma_{ii}=1$ and $-1\le\sigma_{ij}=\sigma_{ji}\le1)$ with $A$ a lower triangular
non-singular matrix. Then the components of the random vector $\vk{\theta}=A \U$ satisfy for any $i\le d$}
\BQN\label{eq:comp}
|\Theta_i- \sigma_{i1} \Theta_1|=|\Theta_i- \sigma_{i1} U_1|\le\sqrt{1-\sigma_{i1}^2}\sqrt{1-\Theta_1^2}=\sqrt{1-\sigma_{i1}^2}\sqrt{1-U_1^2}.
\EQN
 \end{lem}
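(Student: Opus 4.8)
The plan is to reduce \eqref{eq:comp} to a single application of the Cauchy--Schwarz inequality, performed not in all of $\R^d$ but inside the hyperplane orthogonal to the first row of $A$; this is exactly what produces the sharp factor $\sqrt{1-\Theta_1^2}$ rather than the trivial bound $1$.

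First I would dispose of the two equalities in \eqref{eq:comp}. Since $A$ is lower triangular, its first row is $\vk{a}_1=(a_{11},0,\ldots,0)$, and $\SI=AA^\top$ being a correlation matrix gives $a_{11}^2=\sigma_{11}=1$; normalising the triangular factor so that $a_{11}=1$ (the usual Cholesky convention), we obtain $\vk{a}_1=\vk{e}_1$, hence $\Theta_1=(A\U)_1=U_1$ and $\sigma_{i1}=(AA^\top)_{i1}=a_{i1}a_{11}=a_{i1}$. In particular $1-\Theta_1^2=1-U_1^2$ and $\sigma_{i1}\Theta_1=\sigma_{i1}U_1$, so both equalities hold and it remains to bound $|\Theta_i-\sigma_{i1}\Theta_1|$.

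Then I would set up two orthogonal splittings relative to the unit vector $\vk{a}_1$. Writing $\vk{a}_i$ for the $i$-th row of $A$, we have $\Theta_i=\langle\vk{a}_i,\U\rangle$, $\norm{\vk{a}_i}^2=\sigma_{ii}=1$ and $\langle\vk{a}_1,\vk{a}_i\rangle=\sigma_{i1}$. Decompose $\vk{a}_i=\sigma_{i1}\vk{a}_1+\vk{w}$ with $\vk{w}\perp\vk{a}_1$, so that $\norm{\vk{w}}^2=\norm{\vk{a}_i}^2-\sigma_{i1}^2=1-\sigma_{i1}^2$, and decompose $\U=\Theta_1\vk{a}_1+\U^{\perp}$ with $\U^{\perp}\perp\vk{a}_1$, so that $\norm{\U^{\perp}}^2=\norm{\U}^2-\Theta_1^2=1-\Theta_1^2$. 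The key line is then
\[
\Theta_i-\sigma_{i1}\Theta_1=\langle\vk{a}_i-\sigma_{i1}\vk{a}_1,\U\rangle=\langle\vk{w},\U\rangle=\langle\vk{w},\U^{\perp}\rangle,
\]
where the last equality uses $\vk{w}\perp\vk{a}_1$ to discard the $\Theta_1\vk{a}_1$ component of $\U$. Cauchy--Schwarz applied to the right-hand side yields $|\Theta_i-\sigma_{i1}\Theta_1|\le\norm{\vk{w}}\,\norm{\U^{\perp}}=\sqrt{1-\sigma_{i1}^2}\sqrt{1-\Theta_1^2}$, which is \eqref{eq:comp}.

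I do not expect a genuine obstacle here; the only thing to be careful about is to project onto $\vk{a}_1^{\perp}$ \emph{before} estimating, since a direct Cauchy--Schwarz on $\langle\vk{w},\U\rangle$ would only give the weaker factor $\norm{\U}=1$. A harmless bookkeeping point is the sign of $a_{11}$, which we fix by taking the positive-diagonal (Cholesky) normalisation of the triangular factor.
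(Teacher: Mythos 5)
Your proof is correct and is essentially the paper's proof: since $\vk{a}_1=\vk{e}_1$ under the Cholesky normalisation, your orthogonal decomposition $\vk{w}\perp\vk{a}_1$, $\U^\perp\perp\vk{a}_1$ is just the coordinate split into the last $d-1$ components, and the paper applies Cauchy--Schwarz directly to $\Theta_i-\sigma_{i1}U_1=\sum_{j=2}^d a_{ij}U_j$ with $\sum_{j\ge2}a_{ij}^2=1-\sigma_{i1}^2$ and $\sum_{j\ge2}U_j^2=1-U_1^2$. Your geometric phrasing is a harmless reformulation of the same one-line estimate.
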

\begin{proof}
By the assumptions $\sum_{j=1}^d a_{ij}^2=\sigma_{ii}=1, i\le d$ and
$$
\bH{\Theta_1}= U_1, \quad \bH{\Theta_i}-\sigma_{i1} U_1=\sum_{j=2}^d a_{ji} \cDD{U_j}. 
$$
Since $\sum_{j=2}^d a_{ij}^2=1- \sigma_{i1}^2$ \cDD{and $\sum_{j=2}^d U^2_j=1- U_1^2$} the claim follows by Cauchy-Schwarz inequality.
\end{proof}
\cD{
\begin{korr}\label{cor:last} Under Assumption \aH{\eqref{beta:ey}} we have that for every $j$ with $\beta_j=\beta_1$ and every $\epsilon>0$ there exist \zH{$c,u_0$ positive such that}
\BQN
 \theta_i\le \theta_j \frac{\beta_j} {\beta_i} \frac{\log(\epsilon e^*_j(u))}{\log(u)}
\EQN
\zH{holds for all $\theta_j>1-c/\log(u)$.}
\end{korr}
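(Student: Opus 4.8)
\emph{Proof proposal.} The plan is to derive the corollary from the Lemma proved just above, which is precisely the tool that turns a bound on the correlations $\sigma_{ij}(u)$ into a bound on the components of $\vk{\theta}=A_u\vk{U}$. Fix an index $j$ with $\beta_j=\beta_1$. Since here the matrix $A_u$ is chosen so that $\Theta_j=(A_u\vk{U})_j=U_j$ (equivalently, after reordering the coordinates so that $j$ comes first and performing the Cholesky factorization), the Lemma applies with the $j$-th coordinate in the role played there by the first one, and correlations being basis–independent it gives, for every outcome $\vk{\theta}$ and every $i$,
\[
\theta_i-\sigma_{ij}(u)\,\theta_j\;\le\;\bigl|\theta_i-\sigma_{ij}(u)\,\theta_j\bigr|\;\le\;\sqrt{1-\sigma_{ij}(u)^2}\;\sqrt{1-\theta_j^2}.
\]

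Next I would feed in the hypothesis $\theta_j>1-c/\log(u)$. Writing $1-\theta_j^2=(1-\theta_j)(1+\theta_j)$ and bounding the first factor by $c/\log(u)$ and the second by $2$ gives $\sqrt{1-\theta_j^2}<\sqrt{2c/\log(u)}$. Moreover, once $u$ is large enough that $c/\log(u)\le 1/2$ we have $\theta_j\ge 1/2>0$, hence $1\le 2\theta_j$; substituting both observations into the previous display and factoring $\theta_j$ out of the two resulting terms yields
\[
\theta_i\;\le\;\sigma_{ij}(u)\,\theta_j+\sqrt{1-\sigma_{ij}(u)^2}\,\sqrt{\frac{2c}{\log(u)}}\;\le\;\theta_j\left(\sigma_{ij}(u)+2\sqrt{2c}\,\sqrt{\frac{1-\sigma_{ij}(u)^2}{\log(u)}}\right).
\]

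To finish I would invoke Assumption \eqref{beta:ey}, which is quantified over an arbitrary positive constant and so may be used with $2\sqrt{2c}$ in place of that constant, and with the roles of $i$ and $j$ interchanged (legitimate for indices whose $\beta$ attains the maximum, since then $\sigma_{ij}=\sigma_{ji}$ and $\beta_j/\beta_i=\beta_i/\beta_j$, which is what produces $e^*_j$ rather than $e^*_i$ on the right). For the given $\epsilon$ this supplies a threshold $u_1=u_1(c,\epsilon)$ beyond which $\sigma_{ij}(u)+2\sqrt{2c}\sqrt{(1-\sigma_{ij}(u)^2)/\log(u)}\le \frac{\beta_j}{\beta_i}\frac{\log(\epsilon\,e^*_j(u))}{\log(u)}$; multiplying by $\theta_j>0$ and combining with the last display gives the claim, with $u_0$ taken to be the larger of $u_1$ and $e^{2c}$. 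I do not expect a genuine obstacle: the only points needing care are the bookkeeping of constants (the generic constant in \eqref{beta:ey} must absorb the $\sqrt2$ from $\sqrt{1-\theta_j^2}$ together with the factor $2$ from factoring out $\theta_j\ge1/2$), the index interchange that turns $e^*_i$ into $e^*_j$, and choosing $u_0$ large enough that $\theta_j\ge1/2$ and \eqref{beta:ey} are simultaneously in force.
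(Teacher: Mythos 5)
Your proof is correct and follows essentially the same route as the paper: apply the preceding Lemma (eq.~\eqref{eq:comp}, with coordinate $j$ playing the role of the first coordinate after the Cholesky reparametrisation), use $\theta_j>1-c/\log(u)$ to turn $\sqrt{1-\theta_j^2}$ into an $O(1/\sqrt{\log u})$ term, and then invoke \eqref{beta:ey} with an absorbed constant. The only cosmetic difference is that the paper factors $\theta_j$ out before bounding (writing $\sqrt{1/\theta_j^2-1}$), whereas you bound $\sqrt{1-\theta_j^2}$ first and then factor out $\theta_j$ via $1/\theta_j\le 2$; you are also more explicit than the paper about the index interchange that replaces $e_i^*$ by $e_j^*$, which is a point worth spelling out.
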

}
\cD{
\begin{proof} Condition \bH{\eqref{beta:ey}} and  \eqref{eq:comp} imply
 $$
\theta_i\le \theta_j \left(\sigma_{ij}(u)+\sqrt{1-\sigma_{ij}(u)^2} \sqrt{\frac 1 {\theta_j^2}-1}\right) \le \theta_j \frac{\beta_j} {\beta_i} \frac{\log(\epsilon e^*_j(u))}{\log(u)},
$$
and hence the claim follows.
\end{proof}
}

\prooftheo{theo:dom:AK}
In view of Lemma \ref{theorem:Stratification} \eH{(in Appendix)} we have to analyze the estimators $\bH{Z_j(u)}$. \eH{For simplicity we assume that} $j=1$.
Next, \dH{suppose} that $\beta_1=\max_{1\le i\le d}\beta_i$. We have to show that in this case
\begin{equation}
\lim_{u\to\infty} \frac{\mean{Z_{1}(u)^2}}{\mean{Z_{1}(u)}^2}\le1\label{equ:toshowAKbmax}.
\end{equation}
For a  \bH{constant $c$ such that} $$
c>\sqrt{4\frac{\log(d)}{(\beta_1\gamma_u)^2}}
$$
we split the mean into two cases: $\max_{2\le i\le d} N_i>c\sqrt{\log(u)}$ and  $\max_{2\le i\le d} N_i\le c\sqrt{\log(u)}$.
For the first case note that
\begin{align*}
 \mean{ Z_{1}(u)^2 \mathbb{I}_{\left\{\max_{2\le i\le d} N_i>c\sqrt{\log(u)} \right\}} }
 &\le \sum_{i=2}^d \mean{Z_{1}(u)^2 \cH{\mathbb{I}}_{\left\{ N_i>c\sqrt{\log(u)} \right\}}}\\
&\le \sum_{i=2}^d \mean{\PP\left(\lambda_1e^{\beta_1\gamma_u N_1}>\frac u d\right)^2 \cH{\mathbb{I}}_{\left\{ N_i>c\sqrt{\log(u)} \right\}}}\\
&=d \ \PP\left(\lambda_1 e^{\beta_1\gamma_u N_1}>\frac u d\right)^2\PP\left( \zH{N_1}>c\sqrt{\log(u)}\right)\\
&\approx \exp\left(-\Biggl(\frac{\log\left(\frac{u}{d\lambda_1 }\right)}{\beta_1\gamma_u}\Bigr)^2-\frac{c^2\log(u)}2\right)\\
&\approx\exp\left(-\Biggl(\frac{\log\left(\frac{u}{\lambda_1 }\right)}{\beta_1\gamma_u}\Bigr)^2-\left(\frac{c^2}2-2\frac{\log(d)}{(\beta_1\gamma_u)^2} \right) \log(u) \right)\\
&=o\left(\PP\left(\lambda_1 e^{\beta_1\gamma_u N_1}>u \right)^2\right),
\end{align*}
where $\approx$ is \hH{a} logarithmic asymptotic \cK{and also the last equality holds on this logarithmic scale}.\\
For the second case we have that (\bH{with} $c_1>0$ is a suitable constant)
\aK{\begin{align*}
 &\mean{ Z_{1}(u)^2 \cH{\mathbb{I}}_{\left\{\max_{2\le i\le d} N_i\le c\sqrt{\log(u)} \right\}} }\\
&\le\mean{ \PP\left( \lambda_1 e^{\beta_1\gamma_u N_1} +\sum_{i=2}^d \lambda_i e^{\beta_i\gamma_u a_{i1}N_1+\sum_{j=2}^{i} \beta_i\gamma_u a_{ij}\cD{N}_j }>u,\lambda_1 e^{\beta_1\gamma_u N_1}>\frac ud \right)^2
\cH{\mathbb{I}}_{\left\{\max_{2\le i\le d} N_i\le c\sqrt{\log(u)}\right\} }}\\
&\le\mean{ \PP\left( \lambda_1 e^{\beta_1\gamma_u N_1} +\sum_{i=2}^d \lambda_i  e^{\beta_i\gamma_u a_{1i}N_1+\sum_{j=2}^{i} \beta_i\gamma_u \sqrt{1-a_{1i}^2} c\sqrt{\log(u)} }>u,\lambda_1 e^{\beta_1\gamma_u N_1}>\frac ud \right)^2
 }\\
&\le\mean{ \PP\left( \lambda_1 e^{\beta_1\gamma_u N_1} +\sum_{i=2}^d \lambda_i  e^{\beta_i\gamma_u a_{1i}N_1+d \beta_i\gamma_u \sqrt{1-a_{1i}^2} c\sqrt{\log(u)} }>u,\lambda_1 e^{\beta_1\gamma_u N_1}>\frac ud \right)^2
 }\\
 &=\PP\left( \lambda_1  e^{\beta_1\gamma_u N_1} +\sum_{i=2}^d \lambda_i  e^{\beta_i\gamma_uN_1\left( a_{1i}+d c \sqrt{1-a_{1i}^2} \sqrt{\frac{\log(u)}{N_1^2} }\right) }>u ,\lambda_1 e^{\beta_1\gamma_u N_1}>\frac ud\right)^2\\
  &\le\PP\left( \lambda_1  e^{\beta_1\gamma_u N_1} +\sum_{i=2}^d \lambda_i  e^{\beta_i\gamma_uN_1\left( a_{1i}+d c \sqrt{1-a_{1i}^2} \sqrt{\frac{(\beta_1\gamma_u)^2\log(u)}{\log(u/(d\lambda_1))^2}  } \right)  }>u \right)^2\\
&\lesssim\PP\left( \lambda_1  e^{\beta_1\gamma_u N_1} +\sum_{i=2}^d \lambda_i  e^{\beta_i\gamma_uN_1\left( a_{1i}+c_1 \sqrt{\frac{1-a_{1i}^2}{\log(u)}} \right) }>u \right)^2\\
&\le\PP\left( \lambda_1  e^{\beta_1\gamma_u N_1} +\sum_{i=2}^d \lambda_i  e^{\beta_1\gamma_u N_1  \frac {\log(\epsilon e^*_1(u))}{\log(u)} }>u \right)^2,
\end{align*}}
\bH{where we write $a_{ij}$ instead of $a_{ij}(u)$.} \cD{
\eH{Next, we} can find \bH{another constant}  $c_2$ such that for every $\epsilon$ there exists a $u_\epsilon>1$}
$$c_2>\sup_{u>u_{\cD{\epsilon}}} \sum_{i=2}^d\lambda_i  e^{\left(\log\left(1-c_2\epsilon\frac{e_1^*(u)}{u}\right)-\log(\lambda_1)\right) \frac {\log(\epsilon e^*_1(u))}{\log(u)} }.$$
 \cD{\dH{I}f we set
 $$\eH{L_1}=\frac{\log\left(\frac{u-c_2 \epsilon e^*_1(u)}{\lambda_1}\right)}{\beta_1\gamma_u}$$} then \cD{for $u>u_\epsilon$}
\begin{align*}
& \lambda_1  e^{\beta_1\gamma_u \eH{L_1}} +\sum_{i=2}^d \lambda_i  e^{\beta_1\gamma_u \eH{L_1}  \frac {\log(\epsilon e^*_1(u))}{\log(u)} }\\
&=u-c_2 \epsilon e^*_1(u)+ \epsilon e^*_1(u) \sum_{i=2}^d\lambda_i  e^{\left(\log\left(1-c_2\epsilon\frac{e_1^*(u)}{u}\right)-\log(\lambda_1)\right) \frac {\log(\epsilon e^*_1(u))}{\log(u)} }\le u.
\end{align*}
Hence \eqref{equ:toshowAKbmax} follows from
$$
\mean{ [Z_{1}(u)]^2 \cH{\mathbb{I}}_{\left\{\max_{2\le i\le d} N_i\le c\sqrt{\log(u)} \right\}} }\lesssim \PP\left( \lambda_1  e^{\beta_1\gamma_u N_1}>u-c_2\epsilon e_j^*(u) \right)^2
$$
and letting $\epsilon\to0$. On the other hand
 if $\beta_1 \cK{<} \max_{ i\le d } \beta_i$, then for all $u$ large
$$
Z_{1}(u) \le \pk{\lambda_1  e^{\beta_1\gamma_u N_1}>\frac ud }
$$
implying
$$
\limsup_{u\to\infty}\frac{ \left(\mean{Z_{1}(u)^2}\right)}{ \Psi_1(u) \pk{S(u)>u}}=0,
$$
and hence the \bH{claim} follows from  Lemma \ref{theorem:Stratification}.\QED\\

\def\vcTT{\theta}

\prooftheo{mytheo3}  \cK{Define $ \zH{\vk{\Theta}}_u:=A_u \bH{\vk{U}}$} and write $\bH{\Theta_i}$ for \zH{its} $i$th component. \bH{Note that by the assumption $\Theta_i$ has distribution function not depending on $u$.} Condition \bH{\eqref{eq:main:e} implies}  $\exp(R) \in GMDA(e)$ with $e(u)=u \nu(\log u)$. By the Davis-Resnick tail property (see e.g., Hashorva (2012) or Hashorva (2013b)) for any $c>1$ and $\mu>0$
\BQN\label{DRP}
\limit{u} \frac{ \PP\left(R > \cE{\log(cu)}\right)}{ \left(\frac{e(u)}u\right)^\mu \PP\left(R > \cE{\log(u)}\right)}&=& 0.
\EQN
In order to show the proof we use the next equality, that holds for all $u>d \lambda_j$
\BQNY
\PP\left(\SSU>u\right)&=&\sum_{j=1}^d \PP\left(\SSU>u,\Zju \bH{=\max_{k \le d} } \Zku \right)\\
&=&\sum_{\bH{j}=1}^d  \int_{0}^1  \PP\left(\sum_{i=1}^d \lambda_i e^{R \bH{\Theta_i} \beta_i \gamma_u}>u,\lambda_j  e^{R \bH{\Theta_j} \beta_j \gamma_u} \bH{=\max_{k \le d}}
\lambda_k  e^{R \bH{\Theta_k}\beta_k \gamma_u} \Bigl | \bH{\Theta}_j=\theta \right) \ftj(\theta) d \theta,
\EQNY
where without loss of generality we will assume  that depending on $j$ an $A_u$ is chosen such that  $\bH{\Theta_j}=U_j$ and  \dH{the pdf} $f$ is given by
\eqref{dtheta}.
For a fixed $j$,  we  split the integral above into two parts determined through $a(u)=1-2\log(d)/\log(u), u>d\lambda_j$.
\cD{Then we have that}
\begin{align*}
&\int_{0}^{a(u)}  \PP\left(\sum_{i=1}^d \lambda_i e^{R \bH{\Theta_i}  \beta_i \gamma_u}>u,\lambda_j  e^{R \Theta_j\beta_j \gamma_u}>\max_{k\not=j}\lambda_k  e^{R \Theta_k\beta_k \gamma_u}  \Bigl |\bH{\Theta_j} =\theta \right) \ftj(\theta) d \theta\\
&\le\int_{0}^{a(u)} \PP\left(d \lambda_j e^{R \theta \beta_j \gamma_u}>u\right)\ftj(\theta) d \theta 
\\&=o\left(\PP\left( \Zju >u\right)\right).
\end{align*}
The last equality follows as a combination of \zH{\eqref{DRP} and } Lemmas \ref{remarkunivariateasymptotic}, \cD{\ref{remarkunivariateasymptotic2}}
 \dH{in Appendix}. \\
\cE{Further for any $\epsilon\in (0,1)$  and $u>u_0$ ($u_0$ from condition \ref{beta:ey}) we obtain} \cD{by Corollary \ref{cor:last}}
\begin{align*}
&\int_{a(u)}^1  \PP\left(e^{R \bH{\Theta_i}   \beta_i \gamma_u}>u \Bigl |\bH{\Theta_j} =\theta \right) \ftj(\theta) d \theta\\
&\cD{\lesssim} \int_{a(u)}^1  \PP\left(\sum_{i=1}^d \lambda_i e^{R \Theta_i  \beta_i \gamma_u}>u,\lambda_j  e^{R \bH{\theta} \beta_j \gamma_u}>\max_{k\not=j}\lambda_k  e^{R \Theta_k\beta_k \gamma_u}|\bH{\Theta_j} =\theta \right) \ftj(\theta) d \theta\\
&\le \int_{a(u)}^1 \PP\left( \lambda_j e^{R \theta  \beta_j \gamma_u}+\sum_{i\not=j} \lambda_i e^{R \theta \beta_j \gamma_u \frac {\log(\epsilon e^*_j(u))}{\log(u)}  }>u\right) \ftj(\theta) d \theta.
\end{align*}
\cD{Next, we choose $c$ such that for all $\tau<1$ there {exists} a  $u_\tau$ such that}
$$
c> \sup_{u>u_{\cD{\tau}}} \sum_{i\not= j} \lambda_i  e^{\left(\log\left(1-\frac{c \tau e_j^*(u)} u\right) - \log(\lambda_j) \right) \frac {\log(\tau e^*_j(u))}{\log(u)}  }.
$$
Both constants  $u_{\cD{\tau}}$ and $c$ \zH{exist} since we assume that $\limit{u} e(u)=\IF$.  Hence for $u>u_{\bH{\tau}}$ 
\BQNY
 \int_{a(u)}^1 \PP\left( \lambda_j e^{R \theta  \beta_j \gamma_u}+\sum_{i\not=j} \lambda_i e^{R \theta \beta_j \gamma_u \frac {\log(\epsilon e^*_j(u))}{\log(u)}  }>u\right) \ftj(\theta) d \theta
&\le&\int_{a(u)}^1 \PP\left( \lambda_j e^{R \theta  \beta_j \gamma_u}>u-c\epsilon e_j^*(u)\right) \ftj(\theta) d \theta.
\EQNY
Assuming that $\beta_j=\max_{i \bH{\le d}}\beta_i$, Lemma \ref{remarkunivariateasymptotic}  implies thus 
$$\eH{\Psi_j(u)}=\PP\left(\SSU>u,\Zju>\max_{k\not=j}\Zku \right)\sim \PP\left(\Zju>u \right).$$
If $\beta_j<\max_{\bH{i\le d}}\beta_i$ \aK{and $k$ \dH{is} such that $\beta_k=\max_{\bH{i\le d}}\beta_i$}, then for every $c_1>1$
\BQNY
\eH{\Psi_j(u)} &\le & \PP\left( X_{k}(u)>u/d \right)\\
&= &\PP\left( X_k(u)>\left(\frac{u}{d\lambda_k}\right)^{\frac{\beta_{k}}{\beta_j}}\right)
\lesssim \PP\left(X_{k}(u)>c_1u\right),
\EQNY
hence the claim follows. \QED

\zH{\prooftheo{lemma:rojas1}} \aK{Again we have to analyze the \zH{estimator} $\hat Z_j(u)$. \hH{Denote by} $f_{-j}(\vk{\theta}_{-j}|\theta)$ the conditional density of $\vk{\Theta}_{-j}:=(\Theta_1,\ldots,\Theta_{j-1},\Theta_{j+1},\ldots,\Theta_d)$ given $\Theta_j=\theta$.
The second moment of the estimator is  given by
\begin{equation}
\int_{-1}^1 \int \PP\left(\left.S(u)>u,\Zju =\max_{k \le d }\Zku \right| \vk{\Theta}=\vk{\theta} \right)^2 f_{-j}(\vk{\theta}_{-j}|\theta)\,
 d \vk{\theta}_{-j}\frac{ f( \vcTT )} {f_{IS}( \vcTT  )} f( \vcTT )\, d{ \vcTT }.
\label{equ:second:estimator}
\end{equation}
We assume next that $\beta_j=\max_{1\le i\le d} \beta_j$. As in the proof of Theorem \ref{mytheo3} we split the integral  into parts, where $\Theta_j$ is between $a(u)$ and $1$ respectively  $-1$ and $a(u)$. By the same method as in the proof of Theorem \ref{mytheo3} for some $c>0$ and all $\epsilon \in (0,1)$ we obtain
\begin{align*}
&\int_{a(u)}^1 \PP\left(\left. \lambda_je^{R\bH{\Theta}_j\beta_j\gamma_u} >u \right|\bH{\Theta}_j=\vcTT\right)^2\frac{f( \vcTT)}{f_{IS}(\vcTT )} f(\vcTT)\, d\vcTT\\
&\lesssim \int_{a(u)}^1 \int \PP\left(\left.S(u)>u,\Zju>\max_{k\not=j}\Zku \right|{ \vk{\Theta}=\vk{\theta} }\right)^2 f_{-j}(\vk{\theta}_{-j}|\theta) d \vk{\theta}_{-j} \frac{f(\vcTT )} {f_{IS}( \vcTT  )} f( \vcTT )\, d{ \vcTT }\\
&\lesssim \int_{a(u)}^1 \PP\left(\left. \lambda_je^{R\Theta_j\beta_j\gamma_u} >u-c\epsilon e_j^*(u) \right|\Theta_j=\vcTT\right)^2\frac{f(
\vcTT)}{f_{IS}(\vcTT )} f(\vcTT)\, d{\vcTT}.
\end{align*}}
As in the proof of Lemma \ref{remarkunivariateasymptotic} we substitute $\vcTT=\frac{\log(u)}{\log(u)+\log(1+xe^*_j(u)/u) }$.
\zH{Set next $\beta(u):=\bH{\log(u)}/\log(\bH{u/\dH{e^*}(u)})$ and} note that uniformly for $u\to \infty$
\begin{align*}
&\frac{f\left(\frac{\log(u)}{\log(u)+\log(1+xe^*_j(u)/u) }\right)}
{B(\bH{a} ,\beta(u))f_{IS}\left( \frac{\log(u)}{\log(u)+\log(1+xe^*_j(u)/u) }\right)}\\&\sim 2^{\frac{d-3}2+\beta(u)} \frac{\Gamma(d/2)}{\sqrt{\pi} \Gamma((d-1)/2)}\left(\frac{\log(1+xe^*_j(u)/u)}{\log(u)+\log(1+xe^*_j(u)/u) }\right)^{\frac{d-3}2-\beta(u)+1}\\
&\le e 2^{\frac{d-3}2+\beta(u)} \frac{\Gamma(d/2)}{\sqrt{\pi} \Gamma((d-1)/2)}\left(\frac{xe^*_j(u)}{u\log(u) }\right)^{\frac{d-1}2} x^{-\beta(u)}
\end{align*}
and
$$
\frac{f\left(\frac{\log(u)}{\log(u)+\log(1+xe^*_j(u)/u) }\right)}
{f_{IS}\left( \frac{\log(u)}{\log(u)+\log(1+xe^*_j(u)/u) }\right)}\sim e 2^{\frac{d-3}2} \frac{B(\bH{a},\beta(u)) \Gamma(d/2)}{\sqrt{\pi} \Gamma((d-1)/2)}\left(\frac{e^*_j(u)}{u\log(u) }\right)^{\frac{d-1}2} x^{\frac{d-1}2},
$$
where $\bH{B(a ,\beta(u))}= \Gamma(a)\Gamma(\beta(u))/ \Gamma(a+\beta(u))$. Consequently, as in the proof of Lemma \ref{remarkunivariateasymptotic} \begin{align}
& \int_{a(u)}^1 \PP\left(\left. \lambda_je^{R\Theta_j\beta_j\gamma_u} >u-c\epsilon e_j^*(u) \right|\bH{\Theta_j= \vcTT}\right)^2\frac{f(
\vcTT)}{f_{IS}(\vcTT )} f(\vcTT) d{\vcTT}\notag\\
&\sim (1+O(\epsilon))\left(\frac{ 2^{\frac{d-3}2} \Gamma(d/2)}{\sqrt{\pi} \Gamma((d-1)/2)}\left(\frac{e^*_j(u)}{u\log(u)}\right)^{\frac{d-1} 2}\PP\left( \lambda_j   e^{R  \beta_j \gamma_ {u}}>u \right)\right)^2\notag\\
&\quad\times eB(\bH{a},\beta(u)) \int_{0}^\infty e^{-2x} x^{\frac{d-3}2+\frac{d-1}2}  d x\notag\\
&\sim- \cK{ \frac{e \Gamma(d/2) } {2\sqrt{\pi}\Gamma((d-1)/2) }}\log \left( \frac{e^*_j(u)}{u\log(u)}
\right)
 \PP\left(  X_j(u)>u \right)^2.
 \label{asymrnimax}
\end{align}
\aK{Since $B(\bH{a},\beta)\sim 1/\beta$ as $\beta\to 0$, \zH{analogously to the proof of Lemma \ref{remarkunivariateasymptotic2}} we have
\begin{align*}
&\int_{-1}^{a(u)} \int\PP\left(\left.S(u)>u,\Zju>\max_{k\not=j}\Zku \right| \vk{\Theta}=\vk{\theta} \right)^2 f_{-j}(\vk{\theta}_{-j}|\theta) d \vk{\theta}_{-j}\frac{f(\vcTT)}{f_{IS}( \vcTT  )} f( \vcTT ) d{ \vcTT }\\
&\le \int_{-1}^{a(u)} \PP\left(\left.\Zju>u/d \right|{\Theta_j=\vcTT   }\right)^2\frac{f(\vcTT)}{f_{IS}( \vcTT  )} f( \vcTT ) d{ \vcTT }\\
&\sim \int_{0}^{a(u)} \PP\left(\left.\Zju>u/d \right|{\Theta_j=\vcTT   }\right)^2\frac{f(\vcTT)}{f_{IS}( \vcTT  )} f( \vcTT ) d{ \vcTT }\\
&=\log\left(\frac{u\log(u) }{\bH{e^*_j(u)}}\right) o\left( \PP(X_j(u)>u)^2\right).
\end{align*}}
Next assume that $\beta_j<\max_{1\le  i \le d} \beta_i$. The second moment of the estimator is (asymptotically) given by \eqref{equ:second:estimator}. As in the proof of \dH{Theorem} \ref{mytheo3} for every $c>1$ we obtain
\aK{\begin{align*}
&\int_{-1}^{1}\int \PP\left(\left.S(u)>u,\Zju>\max_{k\not=j}\Zku \right| \vk{\Theta}=\vk{\theta} \right)^2 f_{-j}(\vk{\theta}_{-j}|\theta) d \vk{\theta}_{-j}\frac{f(\vcTT)}{f_{IS}(\vcTT )} f({\bf  \theta}) d{ \vcTT }\\
&\lesssim \int_{0}^{1} \PP\left(\left.d \Zju>u \right|\Theta_j=\vcTT\right)^2\frac{f(\vcTT)}{f_{IS}(\vcTT )} f(\vcTT) d{\vcTT}.
\end{align*}}
\dH{We} can proceed as in the proof of \zH{Theorem} \ref{lemma:rojas1} to get that
$$
\mean{[\hat Z_{j}\dH{(u)]^2}} =o \left(\log \left( \frac{u\log(u)} {e^*_j(u)}\right)\left(\frac{ e\left(\left(\frac{u}{d\lambda_j}\right)^{\frac1{\beta_j\gamma_u}}\right)}{e\left(\left(\frac{u}{\lambda_j}\right)^{\frac1{\beta_j\gamma_u}}\right)}\right)^{\frac{d-1} 2} \right) \PP\left(  X_j(u)>u/d \right)^2
$$
and hence the claim  follows by condition \eqref{bed:rojas} and Lemma \ref{theorem:Stratification}. \QED
\appendix

\section{Appendix}
In the sequel we consider some positive random variable  $R$ 
such that its distribution function  $F$
has an infinite upper endpoint. We have the following representation for $F \in GMDA(\nu)$, see e.g., Resn\cD{ic}k (1987) 
\BQN\label{rep}
1- F(u)& =& c(u) \exp \Bigl( - \int_{x_0}^u \frac{g(t)}{\nu(t)}\, dt \Bigr),
\EQN
with $x_0$ some constant and $c,g$ two positive measurable functions such that $\limit{u}c(u)=\limit{u}g(u)=1$.

Further we assume that $e(u)= u\cH{\nu(\log(u))}$ is a scaling function of $\exp(R)$, i.e., $\exp(R)\in GMDA(e)$.
This holds in particular when $\limit{u} \nu(u)\cH{=0}$.  We define $e^*(u)$ by \eqref{eq:eyj} for some $\lambda, \beta$ positive, \hH{i.e.,}
\hH{$$
 e^*(u)= \beta \gamma_u u e\left( \luiu^ {\frac 1 {\beta\gamma_u}}\right)\luiu^ {-\frac 1 {\beta\gamma_u}},
 $$
with $\gamma_u$ such that $\limit{u} \gamma_u= \gamma\in (0,\IF)$.
}
\hH{We proceed with two lemmas and then conclude} this section with two results, the first  shows an unbiased estimator for sums of certain probabilities, whereas the second provides an upper bound on the linear combination of the components of uniformly distributed random vectors on the unit sphere of $\R^d$.

\begin{lem}\label{remarkunivariateasymptotic}
\hH{Let $R$ be a positive random variable}, and let $\ftj$ be \hH{the pdf} given by \eqref{dtheta}. \hH{If $\exp(R)\in GMDA(e)$}, then for any
\hH{$\beta,\lambda,m,\ve$ positive} and some $k>0$
 \begin{equation}
\int_{a(u)}^1\PP\left( \lambda e^{R \theta \beta \gamma_ {u}}>u-\epsilon e^*(u)\right)^m \ftj(\theta) d \theta\\
\cK{=}  \cD{m^{\frac{d-1}2}}(1+O(\epsilon)) \frac{2^{\cD{\frac{d-\cK{3}}2}}\Gamma(d/2)}{\sqrt{\pi}}\left(\frac{e^*(u)}{u\log(u)}\right)^{\frac{d-1} 2}\PP\left( \lambda e^{R \beta \gamma_ {u}}>u \right)^{\cD{m}},
 \end{equation}
\cE{with $a(u)\le 1-k/\log(u)$} \hH{such that} $ \lim_{u\to\infty} a(u)=1$,
 \zH{and $\gamma_u$ \dH{some} positive constants such that $\limit{u} \gamma_u=\gamma\in (0,\IF)$}.
\end{lem}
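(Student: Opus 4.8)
The plan is to transfer everything to $W:=\exp(R)$, which by hypothesis lies in $GMDA(e)$ with $e(w)=w\,\nu(\log w)$, and then to blow up the shrinking neighbourhood of $\theta=1$ that carries the mass of the integral. Write $t_u:=(u/\lambda)^{1/(\beta\gamma_u)}$, so that $\PP(\lambda e^{R\beta\gamma_u}>u)=\PP(W>t_u)$ while the integrand equals $\PP\bigl(W>s_u(\theta)\bigr)^m$ with $s_u(\theta):=\bigl((u-\epsilon e^*(u))/\lambda\bigr)^{1/(\theta\beta\gamma_u)}$. Using the defining relation $e^*(u)=\beta\gamma_u\,u\,e(t_u)/t_u$ and $\log t_u=(\log u-\log\lambda)/(\beta\gamma_u)$, a first-order expansion of $1/\theta$ around $\theta=1$ shows that under the substitution $\theta=\dfrac{\log u}{\log u+\log(1+x\,e^*(u)/u)}$ one has
\begin{align*}
s_u(\theta)=t_u+(x-\epsilon+o(1))\,e(t_u),\qquad 1-\theta\sim \frac{x\,e^*(u)}{u\log u},\qquad d\theta\sim-\frac{e^*(u)}{u\log u}\,dx ;
\end{align*}
this is the change of variables reused later in the proof of Theorem \ref{lemma:rojas1}.

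Inserting this substitution, I would analyse the three factors of the integrand separately. For the density $f(\theta)=\tfrac{\Gamma(d/2)}{\sqrt\pi\,\Gamma((d-1)/2)}(1-\theta^2)^{(d-3)/2}$ one uses $1-\theta^2\sim2(1-\theta)$ to obtain $f(\theta)\sim\tfrac{\Gamma(d/2)}{\sqrt\pi\,\Gamma((d-1)/2)}\bigl(2x\,e^*(u)/(u\log u)\bigr)^{(d-3)/2}$. For the probability, the $GMDA(e)$ property of $W$ gives $\PP\bigl(W>t_u+y\,e(t_u)\bigr)/\PP(W>t_u)\to e^{-y}$ locally uniformly in $y$, whence $\PP\bigl(W>s_u(\theta)\bigr)^m\sim e^{m\epsilon}\,e^{-mx}\,\PP(W>t_u)^m$, the bounded factor $e^{m\epsilon}=1+O(\epsilon)$ being exactly the origin of the $(1+O(\epsilon))$ in the statement. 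Finally, $\theta\in(a(u),1)$ corresponds to $x$ ranging over $(0,x_u)$ with $x_u\to\infty$: indeed $x_u\sim(1-a(u))\,u\log u/e^*(u)\ge k\,u/e^*(u)$ and $e^*(u)/u=\beta\gamma_u\,\nu(\log t_u)\to0$ by \eqref{eq:main:e}, so after rescaling the integration domain exhausts $(0,\infty)$.

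Collecting the three asymptotics and passing to the limit under the integral sign,
\begin{align*}
&\int_{a(u)}^1\PP\bigl(\lambda e^{R\theta\beta\gamma_u}>u-\epsilon e^*(u)\bigr)^m f(\theta)\,d\theta\\
&\qquad\sim(1+O(\epsilon))\,\frac{\Gamma(d/2)\,2^{(d-3)/2}}{\sqrt\pi\,\Gamma((d-1)/2)}\left(\frac{e^*(u)}{u\log u}\right)^{(d-1)/2}\PP(W>t_u)^m\int_0^\infty x^{(d-3)/2}e^{-mx}\,dx .
\end{align*}
Evaluating the elementary Gamma integral $\int_0^\infty x^{(d-3)/2}e^{-mx}\,dx$ (this step produces the power of $m$), using $\tfrac{\Gamma(d/2)}{\sqrt\pi\,\Gamma((d-1)/2)}\,\Gamma((d-1)/2)=\tfrac{\Gamma(d/2)}{\sqrt\pi}$, and recalling $\PP(W>t_u)=\PP(\lambda e^{R\beta\gamma_u}>u)$, yields the claimed equivalence.

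The hard part is justifying the interchange of limit and integral, i.e.\ producing a majorant of the rescaled integrand that is integrable on $(0,\infty)$ uniformly in $u$. I would derive this from Potter-type bounds together with the self-neglecting character of $e$: from the $GMDA$ representation of the form \eqref{rep} (with $e$ in place of $\nu$) and $\limit{u}c(u)=\limit{u}g(u)=1$ one obtains, for every $\eta\in(0,1)$ and all large $u$, $\PP\bigl(W>s_u(\theta)\bigr)/\PP(W>t_u)\le C\,e^{-(1-\eta)x}$ for all $x\in(0,x_u)$, so the rescaled integrand is dominated by a constant multiple of $x^{(d-3)/2}e^{-m(1-\eta)x}$. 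One must then carry the error terms from the expansions of $1/\theta$, $\log(1+x\,e^*(u)/u)$ and $(1-\theta^2)^{(d-3)/2}$ through these estimates and check that the $\epsilon$-dependent shift of the threshold contributes only the bounded multiplicative factor $1+O(\epsilon)$, uniformly in $u$ and $x$.
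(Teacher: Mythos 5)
Your proof follows essentially the same route as the paper's: the same change of variables sending a shrinking neighbourhood of $\theta=1$ onto $(0,\infty)$, the density asymptotics $f(\theta)\sim c\,(1-\theta)^{(d-3)/2}$, the local-uniform $GMDA$ property of $\exp(R)$ producing the factor $e^{-mx}$, and a Potter-type bound (obtained from the $GMDA$ representation for $R$) feeding dominated convergence. One shared slip worth flagging: $\int_0^\infty x^{(d-3)/2}e^{-mx}\,dx = m^{-(d-1)/2}\,\Gamma\bigl((d-1)/2\bigr)$ yields the power $m^{-(d-1)/2}$, not $m^{(d-1)/2}$ as written in the statement, though this does not affect the $m=1$ application in Theorem~\ref{mytheo3}.
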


\def\vuu{\bH{\xi(u)}}
\begin{proof} \hH{The assumption that $\exp(R) \in GMDA(e)$ implies}
\hH{
\BQN
\vuu :=e^*(u)/u \to 0, \quad u\to \IF.
\EQN
}
\hH{Next, set} \cE{$b(x,u)=\log(1+x \vuu )$} and $c=\frac{2^{\frac{d-\cK{3}}2}\Gamma(d/2)}{\sqrt{\pi} \Gamma((d-1)/2)}$. We have
\begin{align*}
&\int_{a(u)}^1\PP\left( \lambda e^{R \theta \beta \gamma_ {u}}>u-\epsilon e^*(u)\right)^{\cD{m}} \ftj(\theta) d \theta\\
&\sim c\int_{a(u)}^1\PP\left( \lambda e^{R \theta  \beta \gamma_ {u}}>u-\epsilon e^*(u)\right)^{\cD{m}} (1-\theta)^{\frac{d-3}2} d \theta \\
&=c\int_{0}^{\frac{ (u-\epsilon e^*(u))^{ 1/a(u)-1}-1}{ \vuu }} \frac{ \vuu \log(u-\epsilon e^* (u))}{\left(x  \vuu   + 1 \right)\left( \log(u-\epsilon e^* (u)) +b(x,u)\right)^2 } \\&\quad\quad\times\PP\left( \lambda  e^{R  \beta \gamma_ {u}}>\left(u-\epsilon e^*(u)\right)^{{1+\frac{\log(1+x  \vuu ) }{\log(u-\epsilon e^* (u))} }}\right)^{\cD{m}}  \left(\frac{ b(u,x)}{\log(u-\epsilon e^* (u))+b(u,x)}\right)^{\frac{d-3}2} dx\\
&\sim c \left(\frac{ \vuu }{\log(u)}\right)^{\frac{d-1} 2} \int_{0}^{\frac{(u-\epsilon e^*(u))^{ 1/a(u)-1}-1}{ \vuu }} \frac{1}{1+x \vuu } \left(1+\frac{b(u,x)}{ \log(u-\epsilon e^* (u)) }\right)^{-\frac {d+1} 2}\\
&\quad\quad\times\PP\left( \lambda   e^{R  \beta \gamma_ {u}}>\left(u-\epsilon e^*(u)\right)\left(1+x \cE{ \vuu }\right) \right)^{\cD{m}}
\left(\frac{ b(u,x)}{ \vuu  }\right)^{\frac{d-3}2} dx.
\end{align*}
\hH{It follows that $R \in GMDA(\nu)$, where $\nu(\log(u))= e(u)/u$,} hence  Eq. (6.31) of Hashorva (2009) implies for any $\ve>0$ and some $\eta_1,\eta_2$ positive constants
\begin{equation}
\frac{\PP(R> u +x \nu (u))}{\PP(R>u)}\le \eta_1 (1+ \eta_2 x)^{-1/\ve}.\label{test}
\end{equation}
Consequently, by the dominated convergence theorem
\begin{align*}
&\int_{0}^{\frac{ u^{ 1/a(u)-1}-1}{\cE{ \vuu }}} \frac{ \vuu \log(u)}{\left(x  \vuu   + 1 \right)\left( \log(u) +b(u,x)\right)^2 } \\&\quad\quad\times\PP\left( \lambda  e^{R  \beta \gamma_ {u}}>u+x(1+O(\epsilon))e^*(u) \right)^{\cD{m}}  \left(\frac{ b(u,x)}{\log(u)+b(u,x)}\right)^{\frac{d-3}2} dx\\
&\sim \left(\frac{ \vuu }{\log(u)}\right)^{\frac{d-1} 2}\PP\left( \lambda e^{R  \beta \gamma_ {u}}>u \right)^{\cD{m}}\int_{0}^\infty e^{-m x(1+O(\epsilon))} x^{\frac{d-3}2}d x\\
&\sim \cD{m^{\frac{d-1}2}} (1+O(\epsilon)) \frac{2^{\frac{d-\cK{3}}2}\Gamma(d/2)}{\sqrt{\pi}}\left(\frac{ \vuu }{\log(u)}\right)^{\frac{d-1} 2}\PP\left( \lambda  e^{R  \beta
 \gamma_ {u}}>u \right)^{\cD{m}}, 
\end{align*}
and thus the proof is complete. \end{proof}

\begin{lem}\label{remarkunivariateasymptotic2}
\hH{Under the assumptions of Lemma \ref{remarkunivariateasymptotic}, for any $\beta,\lambda,\ve$ positive} and some $k>d$
 \begin{equation}
\int_0^{1-\log(k)/\log(u)}\PP\left( \lambda e^{R \theta \beta \gamma_ {u}}>u/d\right) \ftj(\theta) d \theta\\
\ll   \frac{2\Gamma(d/2)}{\sqrt{\pi}}\left(\frac{e^*(u)}{u\log(u)}\right)^{\frac{d-1} 2}\PP\left( \lambda e^{R \beta \gamma_ {u}}>u \right),
 \end{equation}
where for two functions  $h_1(u)\ll h_2(u)$ means $h_1(u)=o(h_2(u))$.
\end{lem}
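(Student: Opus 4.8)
The plan is to follow the structure of the proof of Lemma~\ref{remarkunivariateasymptotic}, exploiting that here the range of integration is bounded away from $1$ and that $k>d$; together these will make the whole integral of strictly smaller order than the right-hand side. Throughout put $w=w(u):=\log(u/\lambda)/(\beta\gamma_u)$, so that $\PP(\lambda e^{R\beta\gamma_u}>u)=\PP(R>w)$; note that directly from the definition \eqref{eq:eyj} of $e^*$ and $e(v)=v\nu(\log v)$ one gets the identity $e^*(u)/u=\beta\gamma_u\,\nu(w)$, whence (using $\gamma_u\to\gamma\in(0,\IF)$ and $\log(u/(d\lambda))\sim\log u$) the right-hand side of the claimed inequality is of the exact order $\nu(w)^{(d-1)/2}(\log u)^{-(d-1)/2}\PP(R>w)$. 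The goal is thus to show the left-hand integral is $o$ of this.

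First I would rewrite, for $\theta\in(0,a(u)]$ with $a(u)=1-\log(k)/\log(u)$, the probability in the integrand as $\PP(\lambda e^{R\theta\beta\gamma_u}>u/d)=\PP\big(R>w+\kappa(u,\theta)\big)$ with $\kappa(u,\theta):=(\beta\gamma_u)^{-1}\big(\log(u/(d\lambda))\,(1-\theta)/\theta-\log d\big)$. Since $\log(u/(d\lambda))\,(1-a(u))/a(u)\to\log k$ and $k>d$, one checks that $\kappa(u,\theta)\ge\tfrac12(\beta\gamma_u)^{-1}\log(k/d)>0$ uniformly in $\theta\le a(u)$ for all large $u$; this is where the hypothesis $k>d$ is used, and it ensures the threshold for $R$ stays strictly above $w$.

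Next I would apply the Potter-type bound \eqref{test} for $R\in GMDA(\nu)$. Fix once and for all $\ve\in(0,2/(d-1))$, so that $1/\ve>(d-1)/2$; then there are $\eta_1,\eta_2>0$ with $\PP(R>w+\kappa)\le\eta_1\PP(R>w)(1+\eta_2\kappa/\nu(w))^{-1/\ve}\le\eta_1\eta_2^{-1/\ve}\PP(R>w)\,\nu(w)^{1/\ve}\kappa^{-1/\ve}$ for all large $u$ and all $\kappa\ge0$. Substituting this into the integral and changing variables by $s=\log(u/(d\lambda))\,(1-\theta)/\theta$, equivalently $\theta=L'/(L'+s)$ with $L':=\log(u/(d\lambda))$ (so $d\theta=L'(L'+s)^{-2}\,ds$, $\kappa(u,\theta)=(\beta\gamma_u)^{-1}(s-\log d)$, $1-\theta^2=s(2L'+s)/(L'+s)^2$, and $\theta\le a(u)\Leftrightarrow s\ge s_0$ with $s_0=s_0(u)\to\log k$), the integral in question is bounded by a constant (depending only on $\ve,d,\beta,\gamma,\lambda$) times $\PP(R>w)\,\nu(w)^{1/\ve}\int_{s_0}^{\infty}(s-\log d)^{-1/\ve}\big(s(2L'+s)/(L'+s)^2\big)^{(d-3)/2}L'(L'+s)^{-2}\,ds$.

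It remains to bound this $s$-integral by $O(L'^{-(d-1)/2})$. On $[s_0,L']$ one uses $s(2L'+s)/(L'+s)^2\asymp s/L'$ and $L'(L'+s)^{-2}\asymp L'^{-1}$, so the integrand is $\lesssim L'^{-(d-1)/2}(s-\log d)^{-1/\ve}s^{(d-3)/2}$, whose integral over $[s_0,\infty)$ is finite precisely because $1/\ve>(d-1)/2$; on $[L',\infty)$ the bracket is bounded and $L'(L'+s)^{-2}\le L'/s^2$, which gives a contribution $O(L'^{-1/\ve})=o(L'^{-(d-1)/2})$. Hence the left-hand integral is $\lesssim\nu(w)^{1/\ve}(\log u)^{-(d-1)/2}\PP(R>w)$, and dividing by the right-hand side (of order $\nu(w)^{(d-1)/2}(\log u)^{-(d-1)/2}\PP(R>w)$) leaves $\lesssim\nu(w)^{1/\ve-(d-1)/2}\to0$ as $u\to\IF$, which is exactly the asserted $\ll$. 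The one delicate point is the choice of $\ve$: it must be small enough that the $s$-integral converges and concentrates near $\theta=1$ (producing precisely the factor $(\log u)^{-(d-1)/2}$ out of $(1-\theta^2)^{(d-3)/2}$ and the Jacobian), while still leaving a residual $\nu(w)^{1/\ve-(d-1)/2}\to0$. A naive estimate of the integrand by its maximum over $[0,a(u)]$ would destroy the logarithmic factor and fail, and the Davis--Resnick property \eqref{DRP} alone only yields smallness in powers of $e^*(u)/u$, not in $\log u$.
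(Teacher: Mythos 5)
Your proof is correct and reaches the conclusion, but by a route that is streamlined relative to the paper's. The paper first splits off $[0,b(u)]$ with $b(u)\to1$, substitutes $\theta=\log(u/k)/\log(u+x e^*(u))$, then invokes the Davis--Resnick property \eqref{DRP} to extract the factor $(e^*(u)/u)^{(d-1)/2}$ together with an extra $o(1)$ from the constant shift $k/d>1$, and finally combines the self-neglecting representation of $e$ with the Potter-type bound \eqref{test} to obtain a dominating function and pass to the limit. You instead fold the whole argument into a single application of \eqref{test}: by fixing $\varepsilon$ with $1/\varepsilon>(d-1)/2$, the algebraic change of variables $s=L'(1-\theta)/\theta$ turns the $\theta$-integral into one whose convergence is governed precisely by that choice, and the factor $\nu(w)^{1/\varepsilon}$ produced by \eqref{test} overshoots the needed $\nu(w)^{(d-1)/2}$, giving the $\ll$. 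What the paper's route buys is a cleaner separation between the order of magnitude (via \eqref{DRP}) and the dominated-convergence bookkeeping (via \eqref{test} and self-neglecting bounds), reusing the machinery of Lemma~\ref{remarkunivariateasymptotic} verbatim; what yours buys is economy of means, since you never need \eqref{DRP} or the representation theorem for self-neglecting functions, and the $(\log u)^{-(d-1)/2}$ factor comes out transparently from the Jacobian and the $(1-\theta^2)^{(d-3)/2}$ density on $[s_0,L']$. One small correction to your closing remark: \eqref{DRP} is in fact a consequence of \eqref{test} (choose $\varepsilon<1/\mu$), and the paper's proof does not literally ``lose the $\log u$ factor''; what would lose it is bounding the integrand by its supremum over $[0,a(u)]$, which neither proof does.
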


\begin{proof}
Choose  $b(u)\le a(u):=1- log(k)/\log(u)$ with $\lim_{u\to \IF}b(u)= 1$, such that $$\PP\left( \lambda e^{R b(u) \beta \gamma_ {u}}>u/d\right)=o\left(\int_0^1\PP\left( \lambda e^{R \theta \beta \gamma_ {u}}>u\right) \ftj(\theta) d \theta \right).$$
Set $ \vuu =e^*(u)/u, b(x,u)=\log(1+x \vuu )$. By substituting
$$
\theta=\frac{\log(u/k)}{\log(u+x e^*(u))}
$$
and for some $c>0$, we have that
\BQNY
\int_{b(u)}^{a(u)}\PP\left( \lambda e^{R \theta \beta \gamma_ {u}}>u/d\right) \ftj(\theta) d \theta
&\sim & c  \int_{0}^{\frac{ (u/k)^{ 1/b(u)}-u}{e^*(u)}} \frac{e^*(u) \log(u/k)} { (u+xe^*(u)) \log(u+x e^* (u))^2  } \\
&\times& \PP\left( \lambda  e^{R  \beta \gamma_ {u}}>\left(u/d\right)^{{\frac{\log(u+x e^*(u)) }{\log(u/k)} }}\right)  \left(\frac{ b(u,x)+\log(k)}{\log(u)+b(u,x)}\right)^{\frac{d-3}2} dx\\
&\lesssim & c \log(u)^{-\frac{d-1}2} \int_{0}^{\frac{ (u/k)^{ 1/b(u)}-u}{e^*(u)}} \frac{e^*(u)} { (u+xe^*(u))   } \\
&\times &  \PP\left( \lambda  e^{R  \beta \gamma_ {u}}>\frac kd \left(u+xe^*(u)\right)\right)  \left(\frac{ b(u,x)+\log(k)}{1+b(u,x)/\log(u)}\right)^{\frac{d-3}2} dx.
\EQNY
Next, \eqref{DRP} implies
\begin{align*}
\PP\left( \lambda  e^{R  \beta \gamma_ {u}}>\frac kd \left(u+xe^*(u)\right)\right) &\lesssim\left(
 \frac{e\left(\left(\frac{1} {\lambda }(u+xe^*(u))\right)^{\frac 1 {\beta \gamma_u }} \right)}{ \left(\frac{1} {\lambda }(u+xe^*(u))\right)^{\frac 1 {\beta \gamma_u }}} \right)^{\frac{d-1} 2}\PP\left( \lambda  e^{R  \beta \gamma_ {u}}> u+xe^*(u)\right)\\
\end{align*}
From the representation theorem for self-neglecting functions (cf. Bingham et al. (1987)) it follows that for every $\delta >0$ and u large enough  and $x>0$ \zH{we have}
$$e(u+xe(u))/e(u) \le(1+\delta) (1+\delta x).$$
Together with \eqref{test} \zH{it follows that} for every $\epsilon>0$ there \zH{exist} $\eta_1$ and $\eta_2$ such that
$$
\left(
 \frac{e\left(\left(\frac{1} {\lambda }(u+xe^*(u))\right)^{\frac 1 {\beta \gamma_u }} \right)}{ \left(\frac{1} {\lambda }(u+xe^*(u))\right)^{\frac 1 {\beta \gamma_u }}} \right)^{\frac{d-1} 2}\PP\left( \lambda  e^{R  \beta \gamma_ {u}}> u+xe^*(u)\right) \lesssim \eta_1(1+\eta_2 x)^{-1/\epsilon}  \left( \frac{e^*\left(u\right)}{u}\right)^{\frac{d-1} 2}\PP\left( \lambda  e^{R  \beta \gamma_ {u}}> u\right)
$$
\zH{holds uniformly for $x>0$}, and hence the proof follows with similar arguments as that of  Lemma \ref{remarkunivariateasymptotic}.
\end{proof}

\COM{
\begin{lem}\label{lemmaboundtailedu}If Assumptions \ref{Assumptions:firstorder} -\ref{cond4} is fulfilled then for every fixed $c>1$ and $k>0$
 \begin{equation*}
\PP\left(R > \cE{\log(cu)}\right)\lesssim \left(\frac{e(u)}u\right)^k \PP\left(R > \cE{\log(u)}\right).
 \end{equation*}
\end{lem}
\begin{proof} For some $c_1>0$
 \begin{align*}
 \frac{\PP\left(R >\log(cu)\right)}{\PP\left(R > \log(u)\right)} &\sim  \exp\left(-\int_{u}^{cu} \frac{1}{e(x)} dx\right)
= \exp\left(- \frac{u}{e(u)}\int_{1}^{c} \frac{e(u)}{e(ux)} dx\right)
\lesssim\exp\left(-\frac{u}{e(u)} c_1\right),
\end{align*}
thus the result follows. \end{proof}

\begin{lem}\label{lemmaboundtailu} If for some constant $c_0\in [0,\IF)$ we have
\BQNY
\limit{u} \frac{ \log(u) e(u)}{u}&=& c_0,
\EQNY
then for every fixed $c>1$ and $\epsilon>0$
 \begin{equation*}
\PP\left(\cE{R} >\log(cu)\right)\lesssim u^{-\frac{\log(c)}{c_0+\epsilon}}\PP\left( R >\log(u)\right).
 \end{equation*}
\end{lem}
\begin{proof}
\begin{align*}
 \frac{\PP\left(R > \log(cu)\right)}{\PP\left(R >\log(u)\right)} &\sim  \exp\left(-\int_{u}^{cu} \frac{1}{e(x)} dx\right)\\
&\lesssim  \exp\left(- \frac{1}{(c_0+\epsilon)} \int_{u}^{cu} \frac{\log(x)}{x} dx\right)\\
&=\exp\left(- \frac{1}{(c_0+\epsilon)} \int_{1}^{c} \frac{\log(u x)}{x} dx\right)\\
&\le\exp\left(-\frac{1}{(c_0+\epsilon)} \int_{1}^{c} \frac{\log(u )}{x} dx\right)=u^{-\frac{\log(c)}{c_0+\epsilon}}.
\end{align*}
\end{proof}
}

\begin{lem}\label{theorem:Stratification}
 Assume that that \cE{$A_i,i\le d$} are events and $Z_i$ is an unbiased estimator for $\PP(A_i)$ for $i\le d$. Let $\kal{I}$ be an integer valued random variable with
$$
\cE{\PP(\kal{I}=i)=\frac{z_i }{z}, \quad z:=\sum_{j=1}^d z_j,}
$$
with $z_i>0$ some positive constants. Then
$
Z:=z\sum_{k=1}^d \mathbb{I}_{\{\kal{I}=k\}} \frac {Z_k} {z_k }
$
 is an unbiased estimator for $\sum_{i=1}^d \PP(A_i)$ with
$$
\mean{Z^2}=z\sum_{i=1}^d \frac{\mean{Z_i^2}}{z_i}.
$$
\end{lem}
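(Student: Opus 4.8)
The plan is to use two elementary facts: that the stratification variable $\kal{I}$ is drawn independently of $(Z_1,\ldots,Z_d)$, and that the events $\{\kal{I}=k\}$, $k\le d$, partition the sample space, so that $\mathbb{I}_{\{\kal{I}=k\}}\mathbb{I}_{\{\kal{I}=\ell\}}=0$ for $k\ne\ell$ while $\mathbb{I}_{\{\kal{I}=k\}}^2=\mathbb{I}_{\{\kal{I}=k\}}$. Everything then follows by linearity of expectation.

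First I would check unbiasedness. By linearity of expectation and the independence of $\kal{I}$ and $Z_k$, one has $\mean{\mathbb{I}_{\{\kal{I}=k\}}Z_k}=\PP(\kal{I}=k)\mean{Z_k}=(z_k/z)\PP(A_k)$, using that $Z_k$ is unbiased for $\PP(A_k)$. Summing over $k$ gives $\mean{Z}=z\sum_{k=1}^d z_k^{-1}(z_k/z)\PP(A_k)=\sum_{k=1}^d\PP(A_k)$, as required.

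For the second moment, I would square $Z$ and observe that all cross terms $\mathbb{I}_{\{\kal{I}=k\}}\mathbb{I}_{\{\kal{I}=\ell\}}$ with $k\ne\ell$ vanish and $\mathbb{I}_{\{\kal{I}=k\}}^2=\mathbb{I}_{\{\kal{I}=k\}}$, so that $Z^2=z^2\sum_{k=1}^d \mathbb{I}_{\{\kal{I}=k\}}Z_k^2/z_k^2$ almost surely. Taking expectations and using independence of $\kal{I}$ and $Z_k$ once more gives $\mean{Z^2}=z^2\sum_{k=1}^d z_k^{-2}\PP(\kal{I}=k)\mean{Z_k^2}=z\sum_{k=1}^d \mean{Z_k^2}/z_k$, which is the claimed identity.

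There is no genuine obstacle in this argument; the only points to keep in mind are that the constants $z_i$ are assumed strictly positive (so the weights $1/z_k$ and the law of $\kal{I}$ are well defined) and that $\kal{I}$ is generated independently of the underlying estimators, which is precisely how the construction is used in \eqref{Strat} and \eqref{RN}.
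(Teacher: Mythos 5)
Your proof is correct and is exactly the ``straightforward calculation'' that the paper's proof tersely alludes to: unbiasedness via linearity and independence, and the second-moment identity via the orthogonality of the indicator events $\{\kal{I}=k\}$. You also rightly flag that the argument uses the (implicit) independence of $\kal{I}$ from the estimators $Z_1,\ldots,Z_d$, which the lemma's statement leaves unsaid but which is how $\kal{I}$ is constructed in \eqref{Strat} and \eqref{RN}.
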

\begin{proof} The proof follows by straightforward calculations. \end{proof}

\textbf{Acknowledgments.} The authors would like to thank the referees for their careful reading and helpful comments.
\hH{Dominik Kortschak} has been  supported by the Swiss National Science Foundation Project 200021-124635/1.
\hH{Both authors also acknowledge partial support from Swiss National Science Foundation Project 200021-1401633/1.}

\end{document}